\newtheorem{theorem}{Theorem}[section]
\newtheorem{corollary}[theorem]{Corollary}
\newtheorem{definition}[theorem]{Definition}
\newtheorem{lemma}[theorem]{Lemma}
\newcommand\g{{\mathfrak g}}
\newcommand{\R}{\mathbb{R}}
\begin{document}

{\bf \large
\centerline{N.~K.~Smolentsev}

\vspace{3mm}
\centerline{Canonical almost pseudo-K\"{a}hler structures }
\centerline{on six-dimensional nilpotent Lie groups\footnote{The work was partially supported by RFBR 12-01-00873-a and by Russian President Grant supporting scientific schools SS-544.2012.1
}}}

\begin{abstract}
It is known that there are 34 classes of isomorphic connected simply connected six-dimensional nilpotent Lie groups. Of these, only 26 classes suppose left-invariant symplectic structures \cite{Goze-Khakim-Med}. In \cite{CFU2} it is shown that 14 classes of symplectic six-dimensional nilpotent Lie groups suppose compatible complex structures and, therefore, define pseudo-K\"{a}hler metrics. In this paper we show that on the remaining 12 classes of six-dimensional nilpotent symplectic Lie groups there are left-invariant almost pseudo-K\"{a}hler metrics, and we study their geometrical properties.
\end{abstract}

\section{Introduction} \label{Intr}
A manifold $M$ is said to be K\"{a}hler if it has three compatible structures: the Riemannian metric $g$, the complex structure $J$ and the symplectic form $\omega$, which possess the properties:

$$
g(JX,JY)=g(X,Y),\qquad g(X,Y)=\omega(X,JY),
$$
for any vectors fields $X,Y$ on $M$. If $J$ is a (not integrable) almost complex structure then the manifold $M$ is said to be \emph{almost pseudo-K\"{a}hler}.

The problem of determining the Hermitian or symplectic manifolds that are not K\"{a}hler has a long history and is still open. All examples that have been discovered of symplectic manifolds that do not assume K\"{a}hler metrics are nilmanifolds, i.e. compact factors of nilpotent Lie groups on discrete subgroups.
In \cite{BG} it is shown that nilmanifolds (except for a torus) do not all suppose left-invariant (positive definite) K\"{a}hler metrics. However, on some of them there can be pseudo-K\"{a}hler, i.e. symplectic, structures, for which the associated metric $g(X,Y)=\omega(X,JY)$ is pseudo-Riemannian.

Thus, the left-invariant pseudo-K\"{a}hler (or indefinite K\"{a}hler) structure $(J, \omega,g)$ on a Lie group $G$ with Lie algebra $\mathfrak{g}$  consists of a nondegenerate closed left-invariant 2-form $\omega$ and a left-invariant complex structure $J$ on $G$ which are compatible, i.e. $\omega(JX, JY) = \omega(X, Y)$, for all $X, Y \in \g$. And, besides, the associated metric $g(X,Y)=\omega(X,JY)$ is pseudo-Riemannian.
	
The complete list of the 14 classes (including the K\"{a}hler structure of a torus) of such left-invariant pseudo-K\"{a}hler structures on six-dimensional nilpotent Lie groups is given in \cite{CFU2}.
In \cite{Sm-11}, such pseudo-K\"{a}hler metrics are investigated in more detail. Explicit expressions of complex structures are given, and the curvature properties are investigated. It appears that there are multiparametrical sets of such complex structures. However, all of them have a series of general properties, namely that the associated pseudo-K\"{a}hler metric is Ricci-flat, the scalar square $g(R,R)$ of the curvature tensor $R$ is equal to zero, and the curvature tensor has some non-zero components depending only on two or, at the most, three, parameters.

The classification of left-invariant symplectic structures on six-dimensional nilpotent Lie groups is established in the paper by Goze, Khakimdjanov and Medina \cite{Goze-Khakim-Med}. According to this classification, there are 12 classes of six-dimensional nilpotent Lie groups which are symplectic but do not suppose compatible complex structures \cite{CFU2}, \cite{Sal-1}. These are the following symplectic Lie groups (where the indexing corresponds to that in \cite{Goze-Khakim-Med}):

$G_1$:
$
\begin{array}[t]{l}
\left[ e_{1},e_{2}\right] =e_{3},\quad \left[ e_{1},e_{3}\right]
=e_{4},\quad \left[ e_{1},e_{4}\right] =e_{5}, \\
\left[ e_{1},e_{5}\right] =e_{6},\quad \left[ e_{2},e_{3}\right]
=e_{5},\quad \left[ e_{2},e_{4}\right] =e_{6},\medskip  \\
\omega =e^{1}\wedge e^{6}+(1-\lambda )e^{2}\wedge e^{5}+\lambda e^{3}\wedge e^{4},\qquad \lambda \in \Bbb{R}%
\setminus \{0,1\}\, .
\end{array}
$

$G_2$:
 $\begin{array}[t]{l}
\left[ e_{1},e_{2}\right] =e_{3},\quad \left[ e_{1},e_{3}\right]
=e_{4},\quad \left[ e_{1},e_{4}\right] =e_{5}, \quad
\left[ e_{1},e_{5}\right] =e_{6},\quad \left[ e_{2},e_{3}\right]
=e_{6},\medskip \qquad  \\
\omega (\lambda )=\lambda (e^{1}\wedge e^{6}+e^{2}\wedge
e^{4}+e^{3}\wedge e^{4}-e^{2}\wedge e^{5}),\qquad \lambda \ne 0\, .
\end{array}
$

$G_3$:
$\begin{array}[t]{l}
\left[ e_{1},e_{2}\right] =e_{3},\quad \left[ e_{1},e_{3}\right]
=e_{4},\quad \left[ e_{1},e_{4}\right] =e_{5}, \quad  \left[ e_{1},e_{5}\right] =e_{6},
\medskip \qquad  \\
\omega =e^{1}\wedge e^{6}-e^{2}\wedge e^{5}+e^{3}\wedge e^{4}\, .
\end{array}
$

$G_4$:
$\begin{array}[t]{l}
\left[ e_{1},e_{2}\right] =e_{3},\quad \left[ e_{1},e_{3}\right]
=e_{4},\quad \left[ e_{1},e_{4}\right] =e_{6}, \quad
\left[ e_{2},e_{3}\right] =e_{5},\quad \left[ e_{2},e_{5}\right]
=e_{6},\medskip  \\
\omega (\lambda _{1},\lambda _{2})=\lambda _{1}e^{1}\wedge e^{4}+\lambda _{2}(e^{1}\wedge e^{5}+e^{1}\wedge e^{6}+e^{2}\wedge e^{4}+e^{3}\wedge e^{5}),\\
\quad  \lambda _{1},\, \lambda _{2}\in \Bbb{R},\, \lambda _{2}\ne 0 \, .
\end{array}
$

$G_5$:
$\begin{array}[t]{l}
\left[ e_{1},e_{2}\right] =e_{3},\ \left[ e_{1},e_{3}\right]
=e_{4},\ \left[ e_{1},e_{4}\right] =-e_{6}, \
\left[ e_{2},e_{3}\right] =e_{5},\ \left[ e_{2},e_{5}\right]
=e_{6},\medskip  \\
\begin{array}[t]{l}
\omega _{1}(\lambda _{1},\lambda _{2})=\lambda _{1}e^{1}\wedge e^{4}+\lambda _{2}(e^{1}\wedge e^{5}+e^{1}\wedge e^{6}+e^{2}\wedge e^{4}+e^{3}\wedge e^{5}),\ \\
\qquad\qquad\qquad \qquad\qquad\qquad \qquad\qquad\qquad  \lambda _{1},\, \lambda _{2}\in \Bbb{R},\, \lambda _{2}\ne 0 \, ,
\end{array}
\qquad \\
\begin{array}[t]{l}
\omega _{2}(\lambda )=\lambda (e^{1}\wedge e^{6} -2e^{1}\wedge e^{5} -2e^{2}\wedge e^{4} +e^{2}\wedge e^{6}+e^{3}\wedge e^{4} +e^{3}\wedge e^{5}),\ \\
\qquad\qquad\qquad \qquad\qquad\qquad\qquad\qquad\qquad \qquad\qquad\qquad  \lambda \ne 0\, ,
\end{array}
\quad  \\
\begin{array}[t]{l}
\omega _{3}(\lambda )=\lambda (e^{1}\wedge e^{4} -e^{1}\wedge e^{5}+e^{1}\wedge e^{6}-e^{2}\wedge e^{4}+e^{2}\wedge e^{5}+e^{2}\wedge e^{6}+ \\
\qquad\qquad\qquad \qquad\qquad\qquad \qquad\qquad\qquad +e^{3}\wedge e^{4}+e^{3}\wedge e^{5}),\ \lambda \ne 0\, ,
\end{array}
\quad  \\
\begin{array}[t]{l}
\omega _{4}(\lambda )=\lambda (2e^{1}\wedge e^{4}+e^{1}\wedge e^{6}+ 2 e^{2}\wedge e^{5}+e^{2}\wedge e^{6}+e^{3}\wedge e^{4} +e^{3}\wedge e^{5}),\ \\
\qquad\qquad\qquad \qquad\qquad\qquad \qquad\qquad\qquad \qquad\qquad\qquad  \lambda \ne 0\, .
\end{array}
\end{array}
$

$G_6$:
$\begin{array}[t]{l}
\left[ e_{1},e_{2}\right] =e_{3},\quad \left[ e_{1},e_{3}\right]
=e_{4},\quad \left[ e_{1},e_{4}\right] =e_{5}, \quad
\left[ e_{2},e_{3}\right] =e_{6},
\medskip \quad  \\
\omega _{1}=e^{1}\wedge e^{6}+e^{2}\wedge e^{4}+e^{2}\wedge e^{5}-e^{3}\wedge e^{4}, \\
\omega _{2}=-e^{1}\wedge e^{6}-e^{2}\wedge e^{4}-e^{2}\wedge e^{5}+e^{3}\wedge e^{4}
\end{array}
$

$G_7$:
$\begin{array}[t]{l}
\left[ e_{1},e_{2}\right] =e_{4},\quad \left[ e_{1},e_{4}\right]
=e_{5},\quad \left[ e_{1},e_{5}\right] =e_{6}, \quad
\left[ e_{2},e_{3}\right] =e_{6},\quad \left[ e_{2},e_{4}\right]
=e_{6},\medskip  \\
\omega _{1}(\lambda )=\lambda (e^{1}\wedge e^{3}+e^{2}\wedge e^{6}-e^{4}\wedge e^{5}),\quad \lambda \ne 0 \, , \\
\omega _{2}(\lambda )=\lambda (e^{1}\wedge e^{6}+e^{2}\wedge e^{5}-e^{3}\wedge e^{4}),\quad \lambda \ne 0 \, .
\end{array}
$

$G_8$:
$\begin{array}[t]{l}
\left[ e_{1},e_{3}\right] =e_{4},\quad \left[ e_{1},e_{4}\right]
=e_{5},\quad \left[ e_{1},e_{5}\right] =e_{6}, \quad
\left[ e_{2},e_{3}\right] =e_{5},\quad \left[ e_{2},e_{4}\right]
=e_{6},\medskip  \\
\omega =e^{1}\wedge e^{6}+e^{2}\wedge e^{5}-e^{3}\wedge e^{4}
\end{array}
\qquad $

$G_9$:
$\begin{array}[t]{l}
\left[ e_{1},e_{2}\right] =e_{4},\quad \left[ e_{1},e_{4}\right]
=e_{5},\quad \left[ e_{1},e_{5}\right] =e_{6}, \quad
\left[ e_{2},e_{3}\right] =e_{6},
\medskip \qquad  \\
\omega (\lambda )=\lambda (e^{1}\wedge e^{3}+e^{2}\wedge
e^{6} -e^{4}\wedge e^{5}),\quad \lambda \ne 0\, .
\end{array}
$

$G_{19}$:
$\begin{array}[t]{l}
\left[ e_{1},e_{2}\right] =e_{4},\quad \left[ e_{1},e_{4}\right]
=e_{5},\quad \left[ e_{1},e_{5}\right] =e_{6},\medskip  \\
\omega =e^{1}\wedge e^{3}+e^{2}\wedge e^{6} -e^{4}\wedge e^{5}
\end{array}
$

$G_{20}$:
$\begin{array}[t]{l}
\left[ e_{1},e_{2}\right] =e_{3},\quad \left[ e_{1},e_{3}\right]
=e_{4},\quad
\left[ e_{1},e_{4}\right] =e_{5},\quad \left[ e_{2},e_{3}\right]
=e_{5},\medskip  \\
\omega _{1}=e^{1}\wedge e^{6}+e^{2}\wedge e^{5}-e^{3}\wedge e^{4}\, .
\end{array}
$

$G_{22}$:
$ \begin{array}[t]{l} \left[ e_{1},e_{2}\right] =e_{5},\quad \left[ e_{1},e_{5}\right] =e_{6}, \qquad  \omega =e^{1}\wedge e^{6}+e^{2}\wedge e^{5}+e^{3}\wedge e^{4}.
\end{array}$

\

In this paper we will consider these six-dimensional nilpotent Lie groups which suppose symplectic structures but do not suppose compatible complex structures.  These groups therefore do not suppose even pseudo-K\"{a}hler metrics. However, they can have left-invariant almost (pseudo) K\"{a}hler structures, i.e. structures in the form $(g,J,\omega)$, where $\omega$ is a left-invariant symplectic form, $J$ is a left-invariant almost complex structure compatible with $\omega$ and $g(X,Y) = \omega (X,JY)$, the associated (pseudo) Riemannian metric.
	
Generally speaking, for a given symplectic structure $\omega$ there is a multiparametrical set of almost complex structures. For such a general compatible almost complex structure $J$ the associated metric $g$ has a very complicated appearance. However, not all elements $\psi_{ij}$ of the matrixes of the operator $J=(\psi_{ij})$ influence the curvature of the metric $g$. For example, for the abelian group $\R^6$ with symplectic structure $\omega = e^1\wedge e^2 +e^3\wedge e^4 +e^5\wedge e^6$, there is a 12-parametrical set of almost complex structures $J=(\psi_{ij})$ that are compatible with $\omega$. However the curvature of the associated metric $g(X,Y)=\omega(X,JY)$ will be zero for any complex structure $J$. Therefore, from the geometrical point of view, there is no sense in considering the general compatible complex structure $J$. It is much more natural to choose the elementary one: $J(e_1)=e_2$,\ $J(e_3)=e_4$,\ $J(e_5)=e_6$. We will adhere to this point of view for all Lie algebras $\g$.

After finding a multiparametrical set of almost complex structures $J=(\psi_{ij})$ that is compatible with the symplectic structure $\omega$ on $\g$,
we will consider as zeros all free parameters $\psi_{ij}$ on which the Ricci tensor of the associated metric does not depend to be zero. We will call such structures \emph{canonical}.

As is known \cite{CFU2}, left-invariant pseudo-K\"{a}hler structures on six-dimensional nilpotent Lie groups have nilpotent complex structures $J$, i.e. structures which suppose a series of nontrivial $J$-invariant ideals of the Lie algebra $\g$. Therefore we will also require that the almost complex structure $J$ shows invariancy of some ideals.
Under the specified conditions, an operator $J$ and the associated metric $g$ are in an explicit form, depend on several parameters and have a simple enough appearance.

In this paper we will find the canonical almost pseudo-K\"{a}hler structures $(g,J,\omega)$ for all the 12 classes of six-dimensional nilpotent symplectic Lie algebras specified above and we will describe their geometrical properties.

All computations are fulfilled in system of computer mathematics Maple.
Formulas for computations are specified in the end of this paper.

\section{Almost pseudo-K\"{a}hler structures} \label{AlmPsKahl}
Let $G$ be a real Lie group of dimension $n=2m$ with Lie algebra $\g$.
A left-invariant almost pseudo-K\"{a}hler (or indefinite almost-K\"{a}hler) structure $(J,\omega)$ on a Lie group $G$ with Lie algebra $\g$ consists of a left-invariant nondegenerate closed 2-form $\omega$ and a left-invariant almost complex structure $J$ on $G$ which are compatible, i.e. $\omega(JX,JY)=\omega(X,Y)$, for all $X,Y\in \g$. Given a pseudo-K\"{a}hler structure $(J,\omega)$ on $\g$, there exists an associated left-invariant pseudo-Riemannian metric $g$ on $\g$ defined by
\[
g(X,Y) =\omega(X,JY), \mbox{ for } X,Y\in \g.
\]

In this paper we will study a left-invariant almost pseudo-K\"{a}hler structure $(J,\omega)$ on a Lie group $G$. As the symplectic structure $\omega$ and the almost complex structure $J$ are left-invariant on $G$, they are defined by their values on the Lie algebra $\g$. Therefore from now on we will deal only with the Lie algebra $\g$ and we will define $\omega$ and $J$ as the symplectic and, respectively, the almost complex structures on the Lie algebra $\g$.
A symplectic form on a Lie algebra $\g$ is a nondegenerate 2-form on $\g$, satisfying to an condition: $\omega([X,Y],Z)-\omega([X,Z],Y) +\omega([Y,Z](X)=0$, for all $X,Y,Z\in\g $.
An almost complex structure $J$ on a Lie algebra $\g$ is an endomorphism $J:\g \to \g$ that satisfies $J^2=-I$, where $I$ is the identity map.

The descending central series $\{C^k\g\}$ of $\g$ is defined inductively by
\[
C^0\g=\g,\quad C^{k+1}\g=[\g,\ C^k\g],\ k\ge 0.
\]
The Lie algebra $\g$ is said to be \emph{nilpotent} if $C^k\g =0$  for some $k$. In this case the minimum length of the descending central series is said to be a \emph{class} (or a \emph{step}) of nilpotencies, i.e. the Lie algebra class is equal to $s$, if both $C^s\g =0$  and $C^{s-1}\g \ne 0$. The Lie group is said to be nilpotent if its Lie algebra is nilpotent.

If $\g$ is $s$-step nilpotent, then the ascending central series $\mathfrak{g}_0=\{0\}\subset \mathfrak{g}_1 \subset \mathfrak{g}_2\subset\dots \subset \mathfrak{g}_{s-1}\subset \mathfrak{g}_s =\mathfrak{g}$ of $\g$ is defined inductively by
\[
\mathfrak{g}_k=\{X\in \mathfrak{g} |\ [X,\mathfrak{g}]\subseteq  \mathfrak{g}_{k-1}\}, \ k\geq 1.
\]
It is well-known that the sequence $\{\g_k\}$ increases strictly until $\g_s$ and, in particular, that the ideal $\g_1$ is the center $\mathcal{Z}$ of Lie algebra $\g$. We will call the sequence of dimensions $(d_1,d_2,\dots, d_s)$ of ideals $\mathfrak{g}_i$  as a \emph{type} of the Lie algebra $\mathfrak{g}$.

Since the spaces $\{\g_k\}$ are not, in general, $J$-invariant, the sequence is not suitable for working with $J$. We introduce a new sequence $\{\mathfrak{a}_l(J)\}$ having the property of $J$-invariance \cite{CFU2}. The ascending series $\{\mathfrak{a}_l(J)\}$ of the Lie algebra  $\g$, compatible with the left-invariant complex structure $J$ on $G$, is defined inductively as follows: $\frak{a}_0(J)=0$,
$$
\frak{a}_l(J) = \{X\in \mathfrak{g}\ |\ [X,\mathfrak{g}]\subseteq  \frak{a}_{l-1}(J) \mbox{ and } [JX,\mathfrak{g}]\subseteq  \frak{a}_{l-1}(J) \}\ l\ge 1.
$$
It is clear that each $\{\mathfrak{a}_l(J)\}$ is a $J$-invariant ideal in $\g$ and that $\mathfrak{a}_k(J)\subset \mathfrak{g}_k$ for $k>0$. In particular, the $J$-invariant ideal $\mathfrak{a}_1(J)$ has dimension not less than two and lies in the Lie algebra center $\mathcal{Z}$. It must be noted that the terms $\mathfrak{a}_l(J)$ depend on the complex structure $J$ considered on $G$. Moreover, this ascending series, in spite of $\g$ being nilpotent, can stop without reaching the Lie algebra $\g$, that is, it may happen that $\mathfrak{a}_l(J)\ne \g$ for all $l$. The following definition is motivated by this fact.
	
\begin{definition} \label{Def1}
The left-invariant complex structure $J$ on G is called nilpotent if there is a number $p$ such that $\mathfrak{a}_p(J)=\g$.
\end{definition}

In \cite{CFU2} it is shown that left-invariant pseudo-K\"{a}hler structures on six-dimensional nilpotent Lie groups have nilpotent complex structures. We study the Lie algebras which do not suppose complex structures. Besides, the Lie algebra center can be one-dimensional. For these the sequence of ideals $\frak{a}_k(J)$ is not defined. However many Lie algebras in this paper have two-dimensional and four-dimensional ideals. It is natural to consider almost complex structures $J$ which leave such ideals invariant.
	
\begin{definition} \label{Def2}
We will call a left-invariant almost complex structure $J$ on a nilpotent Lie group $G$ almost nilpotent if it has increasing sequence of $J$-invariant ideals $\mathfrak{b}_1(J) \subset \mathfrak{b}_2(J)\subset\dots \subset \mathfrak{b}_{m-1}(J)\subset \mathfrak{b}_m =\mathfrak{g}$, of dimensions $2,4,\dots,2m$.
\end{definition}

Let us establish some properties of a symplectic nilpotent Lie algebra $(\g,\omega)$ with a compatible almost complex structure $J$.

\begin{lemma} \label{C1ortZ}
Let $\mathcal{Z}$ be the Lie algebra center, then for any symplectic form $\omega$ on $\g$, the equality   $\omega(C^1\mathfrak{g},\mathcal{Z}) =0$ is fulfilled.
\end{lemma}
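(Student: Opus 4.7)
The plan is to use the closedness of $\omega$ (already stated in the paper as the cocycle identity $\omega([X,Y],Z) - \omega([X,Z],Y) + \omega([Y,Z],X) = 0$ for all $X,Y,Z \in \g$) together with the defining property of the center.

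First I would fix arbitrary elements $X, Y \in \g$ and $Z \in \mathcal{Z}$, and then evaluate the cocycle identity on the triple $(X,Y,Z)$. Since $Z$ is central, $[X,Z] = 0$ and $[Y,Z] = 0$, so the second and third terms in the identity vanish automatically, leaving $\omega([X,Y],Z) = 0$. This holds for all $X, Y$, so $\omega$ annihilates every element of the form $[X,Y]$ paired against $Z$.

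Finally, I would note that $C^1\g = [\g,\g]$ is by definition spanned linearly by brackets $[X,Y]$ with $X,Y \in \g$, and that $\omega$ is bilinear, so the vanishing on generators extends to the whole subspace $C^1\g \otimes \mathcal{Z}$. This gives $\omega(C^1\g, \mathcal{Z}) = 0$ as claimed.

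The proof is essentially a one-line consequence of the cocycle identity, so there is no substantive obstacle; the only thing to be careful about is bookkeeping of the sign/order convention in the closedness condition as written in the excerpt, to make sure the two terms that are killed are indeed the ones involving $[X,Z]$ and $[Y,Z]$. Notice that nilpotency of $\g$ is not used anywhere in the argument — the statement in fact holds for any Lie algebra equipped with a closed (not necessarily nondegenerate) $2$-form.
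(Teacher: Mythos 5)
Your argument is correct and is exactly the paper's own proof: the author likewise evaluates the cocycle identity on $(X,Y,Z)$ with $Z\in\mathcal{Z}$, so that the two terms involving $[X,Z]$ and $[Y,Z]$ vanish and $\omega([X,Y],Z)=0$ follows. Your additional remarks on spanning $C^1\g$ by brackets and on nilpotency being unnecessary are accurate but add nothing beyond the paper's one-line proof.
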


\begin{proof}
This follows at once from the formula $d\omega(X,Y,Z)=\omega([X,Y],Z) -\omega([X,Z],Y) +\omega([Y,Z],X)=0$, \ $\forall X,Y\in\mathfrak{g}$, $\forall Z\in\mathcal{Z}$.
\end{proof}

\begin{corollary}
If $J$ is a compatible almost complex structure, then $g(J(C^1\mathfrak{g}),\mathcal{Z}) =g(C^1\mathfrak{g},J(\mathcal{Z})) =0$ for the associated metric $g(X,Y)=\omega(X,JY)$.
\end{corollary}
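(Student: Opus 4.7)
The plan is to derive both vanishing statements directly from Lemma \ref{C1ortZ} together with the two defining properties of a compatible almost complex structure, namely $J^2=-I$ and $\omega(JX,JY)=\omega(X,Y)$.

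First I would handle $g(J(C^1\g),\mathcal{Z})=0$. Pick $X\in C^1\g$ and $Z\in\mathcal{Z}$ and unfold the definition of the associated metric: $g(JX,Z)=\omega(JX,JZ)$. Compatibility of $J$ with $\omega$ turns the right-hand side into $\omega(X,Z)$, which vanishes by Lemma \ref{C1ortZ}.

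For the second identity $g(C^1\g,J(\mathcal{Z}))=0$, I would instead use $J^2=-I$: for the same $X,Z$ one has $g(X,JZ)=\omega(X,J^2Z)=-\omega(X,Z)=0$, again by Lemma \ref{C1ortZ}. Both identities thus reduce to a single application of the lemma, once we are allowed to move $J$ across the form using compatibility or $J^2=-I$.

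There is no real obstacle here; the only thing to be careful about is that the lemma applies because $\mathcal{Z}$ is an abelian ideal of $\g$ and the 3-form $d\omega$ vanishes identically. No nilpotency assumption beyond what is already stated is needed, and no properties of $J$ beyond compatibility and $J^2=-I$ enter.
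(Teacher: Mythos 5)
Your argument is correct and is exactly the intended one: the paper states this corollary without a separate proof, since it follows immediately from Lemma \ref{C1ortZ} by writing $g(JX,Z)=\omega(JX,JZ)=\omega(X,Z)$ and $g(X,JZ)=\omega(X,J^2Z)=-\omega(X,Z)$. Both of your reductions are valid, so there is nothing to add.
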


\begin{corollary}
If the ideal $C^k\g$,  $k>0$, is $J$-invariant, then any vector $X\in\mathcal{Z}\cap C^k\g$ is isotropic for the associated metric $g(X,Y)=\omega(X,JY)$. In particular, the associated metric is pseudo-Riemannian.
\end{corollary}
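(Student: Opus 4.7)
The plan is to apply Lemma \ref{C1ortZ} directly to the pair $(JX, X)$. Let $X \in \mathcal{Z}\cap C^k\g$. Since $k\geq 1$, one has $X \in C^k\g \subseteq C^1\g$, and by the assumed $J$-invariance of $C^k\g$ also $JX \in C^k\g \subseteq C^1\g$. On the other hand $X\in\mathcal{Z}$, so Lemma \ref{C1ortZ} yields $\omega(JX,X)=0$. Skew-symmetry of $\omega$ then gives
$$
g(X,X)=\omega(X,JX)=-\omega(JX,X)=0,
$$
showing that every such $X$ is isotropic. This is essentially the entire content of the first statement; the step of checking $JX\in C^1\g$ is the only thing that actually uses the $J$-invariance hypothesis.

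For the "in particular" clause I would argue that $g$ admits a \emph{nonzero} isotropic vector, which rules out $g$ being positive or negative definite. Here I would invoke the classical fact that in a nilpotent Lie algebra every nonzero ideal meets the center nontrivially — this can be seen either by applying Engel's theorem to the adjoint action of $\g$ on $C^k\g$, or by intersecting $C^k\g$ with the terms of the ascending central series and noting the first nonzero intersection lies in $\mathcal{Z}$. Consequently $\mathcal{Z}\cap C^k\g \neq 0$ whenever $C^k\g\neq 0$, and by the first part this intersection consists of isotropic vectors, producing at least one nonzero null vector for $g$.

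I do not foresee any real obstacle: once Lemma \ref{C1ortZ} is in hand, the isotropy statement is a two-line computation using only the skew-symmetry of $\omega$ and the containment $C^k\g\subseteq C^1\g$. The only point requiring a small extra remark is the existence of a nonzero element in $\mathcal{Z}\cap C^k\g$, needed to upgrade "isotropic vectors exist" to "the metric is indefinite"; this is the nilpotency input and is entirely standard.
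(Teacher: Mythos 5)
Your proof is correct and follows exactly the route the paper intends: the corollary is stated without proof precisely because it is the two-line application of Lemma \ref{C1ortZ} to the pair $(JX,X)$ that you give, using $JX\in C^k\g\subseteq C^1\g$ and skew-symmetry of $\omega$. Your additional remark justifying the ``in particular'' clause --- that $\mathcal{Z}\cap C^k\g\neq 0$ for a nonzero ideal in a nilpotent Lie algebra, so a nonzero null vector actually exists --- is a detail the paper leaves implicit, and you supply it correctly.
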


\begin{corollary}
If an ideal $\mathfrak {b}$ is $J$-invariant and $\mathfrak{b}\subset\mathcal{Z}$, then $\omega(C^1\mathfrak{g}\oplus J(C^1\mathfrak{g}),\mathfrak{b}) =0$ for any compatible almost complex structure $J$.
\end{corollary}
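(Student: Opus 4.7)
The plan is to derive both pieces of the vanishing from Lemma~\ref{C1ortZ} together with the $J$-invariance hypothesis. Since $\mathfrak{b}\subset\mathcal{Z}$, the identity $\omega(C^1\mathfrak{g},\mathfrak{b})=0$ is immediate from the lemma. So the only real content is to show that $\omega(J(C^1\mathfrak{g}),\mathfrak{b})=0$, and then additivity of $\omega$ in the first argument gives the direct-sum statement.

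For the second piece, I would take arbitrary $X\in C^1\mathfrak{g}$ and $b\in\mathfrak{b}$ and rewrite $\omega(JX,b)$ using compatibility. Because $\mathfrak{b}$ is $J$-invariant and $J^2=-I$, the restriction $J|_{\mathfrak{b}}$ is a bijection of $\mathfrak{b}$, so $b=J b'$ for $b'=-Jb\in\mathfrak{b}$. Then compatibility of $J$ with $\omega$ yields
\[
\omega(JX,b)=\omega(JX,Jb')=\omega(X,b').
\]
Since $b'\in\mathfrak{b}\subset\mathcal{Z}$ and $X\in C^1\mathfrak{g}$, Lemma~\ref{C1ortZ} gives $\omega(X,b')=0$, hence $\omega(J(C^1\mathfrak{g}),\mathfrak{b})=0$.

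There is no real obstacle here; the only point to be careful about is that one really does need the $J$-invariance of $\mathfrak{b}$ (not just $\mathfrak{b}\subset\mathcal{Z}$), because it is used to guarantee $b'=-Jb\in\mathfrak{b}$ so that the lemma applies to the pair $(X,b')$. Combining the two vanishings and using bilinearity of $\omega$ on the direct sum $C^1\mathfrak{g}\oplus J(C^1\mathfrak{g})$ finishes the proof.
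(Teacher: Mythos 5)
Your argument is correct and is exactly the intended derivation: the paper states this corollary without proof as an immediate consequence of Lemma~\ref{C1ortZ}, with the second half following from compatibility $\omega(JX,JY)=\omega(X,Y)$ once $J$-invariance lets you write $b=Jb'$ with $b'=-Jb\in\mathfrak{b}\subset\mathcal{Z}$. Your remark that the $J$-invariance of $\mathfrak{b}$ is genuinely needed for that step is also the right point to flag.
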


\begin{corollary} \label{1.6}
If an ideal $\mathfrak {b}$ is $J$-invariant and $\mathfrak{b}\subset\mathcal{Z}$, then for any almost (pseudo) K\"{a}hler structure $(\mathfrak{g}, \omega, g, J)$ the ideal $\mathfrak{b}$ is orthogonal to a subspace $C^1\mathfrak{g}\oplus J(C^1\mathfrak{g})$:
$$
g(C^1\mathfrak{g}\oplus J(C^1\mathfrak{g}),\mathfrak{b}) =0.
$$
\end{corollary}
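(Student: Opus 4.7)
The plan is to reduce the claim to Lemma \ref{C1ortZ} using the two defining properties of the almost pseudo-Kähler structure: the compatibility $\omega(JX,JY) = \omega(X,Y)$ and the formula $g(X,Y) = \omega(X,JY)$. Since the subspace $C^1\mathfrak{g} \oplus J(C^1\mathfrak{g})$ splits naturally into two summands, I would verify orthogonality to $\mathfrak{b}$ on each summand separately.

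First I would handle $g(C^1\mathfrak{g},\mathfrak{b})$. For $X \in C^1\mathfrak{g}$ and $Z \in \mathfrak{b}$, one has $g(X,Z) = \omega(X, JZ)$. Because $\mathfrak{b}$ is $J$-invariant, $JZ \in \mathfrak{b} \subset \mathcal{Z}$, so this quantity vanishes by Lemma \ref{C1ortZ} applied to the pair $(X, JZ) \in C^1\mathfrak{g} \times \mathcal{Z}$.

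Next I would handle $g(J(C^1\mathfrak{g}),\mathfrak{b})$. For $X \in C^1\mathfrak{g}$ and $Z \in \mathfrak{b}$, compatibility gives $g(JX,Z) = \omega(JX,JZ) = \omega(X,Z)$, and this is again zero directly by Lemma \ref{C1ortZ} since $Z \in \mathfrak{b} \subset \mathcal{Z}$. Summing the two cases yields $g(C^1\mathfrak{g} \oplus J(C^1\mathfrak{g}),\mathfrak{b}) = 0$.

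There is no real obstacle here; the whole content is in Lemma \ref{C1ortZ}, and the only thing one must be careful about is invoking $J$-invariance of $\mathfrak{b}$ at the right moment so that $JZ$ still lies in $\mathcal{Z}$ when the lemma is applied. No nondegeneracy of $\omega$ or any further structure of $\mathfrak{g}$ is needed.
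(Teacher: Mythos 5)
Your proof is correct and follows the same route the paper intends: the paper states this as an immediate consequence of Lemma \ref{C1ortZ} (via the preceding corollaries), reducing both summands to $\omega(C^1\mathfrak{g},\mathcal{Z})=0$ through the definition $g(X,Y)=\omega(X,JY)$, the compatibility $\omega(JX,JY)=\omega(X,Y)$, and the $J$-invariance of $\mathfrak{b}$. Your handling of the two summands, and in particular your remark that $J$-invariance is what keeps $JZ$ inside $\mathcal{Z}$, is exactly the point.
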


From the formula
\begin{equation}\label{nabla}
2g(\nabla_X Y, Z)= g([X,Y],Z) + g([Z,X],Y)  +g(X,[Z,Y]),\ \forall X,Y,Z\in\g
\end{equation}
for a covariant derivative $\nabla$ of Levi-Civita connection for left-invariant vector fields on a Lie group, the following properties are at once implied:
\begin{itemize}
\item if the vectors $X$ and $Y$ lie in the center $\mathcal{Z}$ of the Lie algebra $\g$, then $\nabla_X Y=0$ for any left-invariant (pseudo) Riemannian metric $g$ on the Lie algebra;
\item if the vector $X$ lies in the center $\mathcal{Z}$ of the Lie algebra $\g$, then $\nabla_X Y = \nabla_Y X$.
\end{itemize}

\begin{lemma} \label{nablaXY}
Let $J$ be a compatible almost complex structure on a symplectic nilpotent Lie algebra $(\g, \omega)$  and $g=\omega \cdot J$ be the associated (pseudo)Riemannian metric. Then for a Levi-Civita covariant derivative on a Lie group, the following properties are fulfilled:
\begin{itemize}
\item if the vector $X$ lies in the center $\mathcal{Z}$ of the Lie algebra $\g$ and the ideal $C^1\mathfrak{g}$ is $J$-invariant, then  $\nabla_X Y = \nabla_Y X =0$ for any $Y\in \mathfrak{g}$;
\item if the vector $X$ lies in a $J$-invariant ideal $\mathfrak{b}\subset \mathcal{Z}$, then $\nabla_X Y=\nabla_Y X=0$,\ $\forall Y\in \mathfrak{g}$.
\end{itemize}
\end{lemma}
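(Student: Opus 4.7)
The plan is to specialize the Koszul formula (equation (\ref{nabla})) to vectors $X$ in the center, reducing $\nabla_X Y$ to a single surviving term, and then dispatch that term using Lemma \ref{C1ortZ} in the first case and Corollary \ref{1.6} in the second.

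First I would substitute $X\in\mathcal{Z}$ into the Koszul formula. Since $[X,Y]=0$ and $[Z,X]=0$ for every $Y,Z\in\mathfrak{g}$, two of the three terms on the right-hand side of (\ref{nabla}) vanish identically, leaving
\[
2g(\nabla_X Y, Z) \;=\; g(X,[Z,Y]).
\]
Thus to prove $\nabla_X Y=0$ it suffices to show that $X$ is $g$-orthogonal to every bracket, i.e.\ to the whole commutator ideal $C^1\mathfrak{g}$.

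For the first claim, I observe that $[Z,Y]\in C^1\mathfrak{g}$, and since $C^1\mathfrak{g}$ is assumed $J$-invariant we have $J(C^1\mathfrak{g})=C^1\mathfrak{g}$. The first corollary to Lemma \ref{C1ortZ} gives $g(J(C^1\mathfrak{g}),\mathcal{Z})=0$, hence $g(\mathcal{Z},C^1\mathfrak{g})=0$, so $g(X,[Z,Y])=0$ and $\nabla_X Y=0$. For the second claim the argument is even more direct: $X\in\mathfrak{b}\subset\mathcal{Z}$ with $\mathfrak{b}$ a $J$-invariant ideal, so Corollary \ref{1.6} yields $g(C^1\mathfrak{g}\oplus J(C^1\mathfrak{g}),\mathfrak{b})=0$ and in particular $g(X,[Z,Y])=0$, giving $\nabla_X Y=0$.

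Finally, the equality $\nabla_Y X=0$ follows immediately from the torsion-free property: since $X\in\mathcal{Z}$ gives $[X,Y]=0$, we have $\nabla_Y X=\nabla_X Y-[X,Y]=0$ (as already noted in the bullet points preceding the lemma). There is no genuine obstacle here — the lemma is essentially a packaging of the two preceding corollaries into a statement about the Levi-Civita connection, and the only point that requires attention is making sure the relevant hypothesis ($J$-invariance of $C^1\mathfrak{g}$ in case (i), or of $\mathfrak{b}\subset\mathcal{Z}$ in case (ii)) is exactly what is needed to convert the $\omega$-orthogonality of Lemma \ref{C1ortZ} into the $g$-orthogonality that kills the remaining Koszul term.
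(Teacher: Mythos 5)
Your proposal is correct and follows essentially the same route as the paper: the paper's proof also specializes the Koszul formula (\ref{nabla}) to central $X$, reduces to the single term $g([Z,Y],X)=-\omega(J[Z,Y],X)$, and kills it with Lemma \ref{C1ortZ} together with the $J$-invariance of the relevant ideal. Your write-up merely makes explicit the second bullet and the torsion-free step $\nabla_Y X=\nabla_X Y$, which the paper leaves implicit.
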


\begin{proof}
Let, for example, $X\in \mathcal{Z}$ and $Z,Y\in \mathfrak{g}$. Then the formula (\ref{nabla}) and Lemma \ref{C1ortZ} imply that $2g(\nabla_X Y,Z) = g([Z,Y],X)=-\omega(J[Z,Y],X) =0$.
\end{proof}

\begin{lemma}\label{ParRiem}
Let $J$ be a compatible almost complex structure on the symplectic nilpotent Lie algebra $(\g, \omega)$, and let $g=\omega \cdot J$ be the associated (pseudo) Riemannian metric. Let us choose a basis $\{e_1, e_2,\dots e_{p-1}, e_{p},\dots, e_{2m}\}$ of the Lie algebra $\g$, such that $\{e_{p},\dots ,e_{2m}\}$ is a basis of the center $\mathcal{Z}$. Let $J = (\psi_{ij})$ be the matrix of $J$ in this basis. Then for all $X,Y\in \g$, the covariant derivative $\nabla_X Y$ does not depend on the free parameters $\psi_{ij}$, where $i=p,\dots 2m$, and for all $j$.
\end{lemma}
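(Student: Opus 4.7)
The plan is to exploit the Koszul formula~(\ref{nabla}) together with Lemma~\ref{C1ortZ}: after a suitable rewriting, every occurrence of $J$ on the right-hand side of Koszul pairs, via $\omega$, with a commutator lying in $C^{1}\g$, and any resulting central contribution is killed by Lemma~\ref{C1ortZ}.

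First I would substitute $g(U,V)=\omega(U,JV)$ into~(\ref{nabla}) and apply the symmetry $g(X,[Z,Y])=g([Z,Y],X)$ to the third summand, obtaining
$$
2\omega(\nabla_XY,JZ)=\omega([X,Y],JZ)+\omega([Z,X],JY)+\omega([Z,Y],JX).
$$
Each bracket on the right now lies in $C^{1}\g$, while the opposite slot of $\omega$ holds $JZ$, $JY$ or $JX$. Since $J^{2}=-I$, the substitution $Z=-JV$ rewrites the identity as
$$
2\omega(\nabla_XY,V)=\omega([X,Y],V)-\omega([JV,X],JY)-\omega([JV,Y],JX),
$$
whose left-hand side pairs $\nabla_XY$ with $V$ directly via $\omega$, independently of $J$.

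Next I would differentiate both sides with respect to $\psi_{ij}$ for fixed $i\in\{p,\dots,2m\}$ (so $e_i\in\mathcal{Z}$) and arbitrary $j$, treating $V,X,Y\in\g$ as independent of $J$. The variation $\partial J/\partial\psi_{ij}$ sends $e_j$ to $e_i$ and annihilates the remaining basis vectors, so $\partial(JV)/\partial\psi_{ij}=v_je_i$ is central, and hence $\partial[JV,X]/\partial\psi_{ij}=v_j[e_i,X]=0$, and likewise for $[JV,Y]$. The only surviving variations sit inside the factors $JY$ and $JX$, contributing $y_je_i$ and $x_je_i$ respectively in the second slot of the corresponding $\omega$ terms; but since $[JV,X]$ and $[JV,Y]$ both lie in $C^{1}\g$ and $e_i\in\mathcal{Z}$, Lemma~\ref{C1ortZ} forces these contributions to vanish. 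The first summand $\omega([X,Y],V)$ is manifestly $J$-independent.

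Consequently $\omega(\partial\nabla_XY/\partial\psi_{ij},V)=0$ for every $V\in\g$, and nondegeneracy of $\omega$ yields $\partial\nabla_XY/\partial\psi_{ij}=0$, as required. The one step I expect to require genuine care, and without which the whole argument would collapse, is the preliminary use of symmetry of $g$ on the third Koszul summand: had $J$ been left to act on $[Z,Y]$ rather than being transferred to $X$, the dangerous central component of $J[Z,Y]$ would end up paired with $X\notin C^{1}\g$ and Lemma~\ref{C1ortZ} would give no information. All remaining work is a routine differentiation combined with the two principles $\ad_{e_i}=0$ for $e_i\in\mathcal{Z}$ and $\omega(C^{1}\g,\mathcal{Z})=0$.
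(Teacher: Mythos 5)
Your proposal is correct and takes essentially the same route as the paper: both rewrite the Koszul formula as $2\omega(\nabla_XY,Z)=\omega([X,Y],Z)-\omega([JZ,X],JY)-\omega([JZ,Y],JX)$ and then eliminate the central part of $J$ using $[\mathcal{Z},\g]=0$ together with Lemma~\ref{C1ortZ}, concluding by nondegeneracy of $\omega$. The paper expresses this via the splitting $J=J_1+J_c$ rather than by differentiating in the parameters $\psi_{ij}$, but the content is identical.
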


\begin{proof}
Let $X\in \g$ and $JX = J_1X +J_cX$, where $J_1X \in \mathbb{R}\{e_1, e_2,\dots, e_{p-1}\}$ and $J_cX\in \mathbb{R}\{e_{p},\dots, e_{2m}\} = \mathcal{Z}$. From the formula(\ref{nabla}) for a covariant derivative it follows that for all $Z\in \g$:
$$
2\omega(\nabla_X Y, JZ)= \omega([X,Y],JZ) + \omega([Z,X],JY)  +\omega([Z,Y],JX),
$$
$$
2\omega(\nabla_X Y, Z)= \omega([X,Y],Z) - \omega([JZ,X],JY)  -\omega([JZ,Y],JX)=
$$
$$
= \omega([X,Y],Z) + \omega([J_1Z+J_cZ,X],J_1Y+J_cY)  +\omega([J_1Z+J_cZ,Y],J_1X+J_cX)=
$$
$$
=\omega([X,Y],Z) + \omega([J_1Z,X],J_1Y)  +\omega([J_1,Y],J_1X).
$$
The last equality is implied from $J_cX\in \mathcal{Z}$ and from Lemma \ref{C1ortZ}. As the component $J_1$ does not depend on the parameters $\{\psi_{ij}\}$, $i=p,\dots 2m,\, j=1,\dots, 2m$, the covariant derivative $\nabla_X Y$ also does not depend on these parameters.
\end{proof}

Let us consider a curvature tensor of the associated metric
\begin{equation}\label{CurvTensor}
R(X,Y)Z=\nabla_X \nabla_Y Z -\nabla_Y \nabla_X Z -\nabla_{[X,Y]}Z.
\end{equation}

\begin{corollary} \label{ParCurvTensor}
Under the conditions of Lemma \ref{ParRiem} the curvature tensor $R(X,Y)Z$ does not depend on the free parameters $\psi_{ij}$, $i=p,\dots 2m,\, j=1,\dots, 2m$.
\end{corollary}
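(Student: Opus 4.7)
The plan is to deduce this directly from Lemma \ref{ParRiem}, since the curvature formula (\ref{CurvTensor}) expresses $R(X,Y)Z$ as a combination of first and second covariant derivatives, and Lemma \ref{ParRiem} already handles the first derivatives. The only point that needs a small argument is the compositional term $\nabla_X \nabla_Y Z$: a priori, applying $\nabla_X$ twice could reintroduce a dependence on the parameters $\psi_{ij}$ with $i \ge p$.

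First, I would fix the basis $\{e_1,\dots,e_{2m}\}$ of $\g$ from the statement of Lemma \ref{ParRiem} and work with coordinates in this basis. For $W\in \g$ write $W=\sum_k w_k e_k$. Lemma \ref{ParRiem} tells us that, for each $X,Y\in\g$ and each $k$, the coefficient of $e_k$ in the expansion of $\nabla_X Y$ is a function of the structure constants and of the parameters $\psi_{ij}$ with $i<p$ only; it is independent of $\psi_{ij}$ for $i\ge p$. Call this property $(\ast)$. Applying $(\ast)$ to the pair $(Y,Z)$, the vector $\nabla_Y Z=\sum_k c_k e_k$ has coordinates $c_k$ independent of the $\psi_{ij}$ with $i\ge p$.

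Next, by $\R$-linearity of $\nabla_X$ in the second argument,
\[
\nabla_X(\nabla_Y Z) \;=\; \sum_k c_k\,\nabla_X e_k.
\]
Each $\nabla_X e_k$ satisfies property $(\ast)$, and the coefficients $c_k$ are independent of the $\psi_{ij}$ with $i\ge p$; therefore the whole sum $\nabla_X\nabla_Y Z$ has coordinates independent of these parameters. The same argument applies to $\nabla_Y\nabla_X Z$, and to $\nabla_{[X,Y]}Z$ (the latter directly by $(\ast)$ applied to the pair $([X,Y],Z)$, noting that the bracket $[X,Y]$ is computed from structure constants only).

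Combining these three observations in the definition (\ref{CurvTensor}), every coordinate of $R(X,Y)Z$ in the chosen basis is independent of $\psi_{ij}$ for $i=p,\dots,2m$ and all $j$, which is the desired conclusion. There is no real obstacle here; the only thing to be careful about is the elementary linearity step that transfers the ``first-derivative'' statement of Lemma \ref{ParRiem} to iterated derivatives, which I have isolated above.
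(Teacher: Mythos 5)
Your argument is correct and is exactly the (implicit) reasoning the paper relies on: the paper states this corollary without proof, treating it as immediate from Lemma \ref{ParRiem} and formula (\ref{CurvTensor}). Your explicit linearity step $\nabla_X(\nabla_Y Z)=\sum_k c_k\,\nabla_X e_k$ (valid since the $c_k$ are constants for left-invariant fields) correctly fills in the only detail the paper leaves unstated.
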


{\bf Remark 1.}
We note that the parameters $\psi_{ij}$ of the complex structure $J$ are restricted by two conditions: a compatibility condition and $J^2=-1$. Therefore some of the parameters $\psi_{ij}$ specified above can be expressed through others. In Lemma \ref{ParRiem} and as a consequence of \ref{ParCurvTensor} this is a question of free parameters, i.e. parameters which remain independent. The curvature does not depend on them.
 \vspace{1mm}

For many six-dimensional nilpotent Lie algebras $\mathfrak{g}$, there is an increasing sequence of ideals $\mathfrak{b}_1 \subset \mathfrak{b}_2\subset \mathfrak{b}_3 =\mathfrak{g}$ of dimensions 2, 4, and 6. Thus $\mathfrak{b}_2 = C^1\g$. Besides, if $\omega$ is set on the $\mathfrak{g}$ symplectic structure then it is possible to decompose  $\mathfrak{g}$ into the direct sum of two-dimensional subspaces:
$$
\mathfrak{g}=A\oplus B \oplus C,
$$
possessing the properties:
\begin{itemize}
\item $C=\mathfrak{b}_1$ and the form $\omega$ vanishes on $C$,
\item $B\oplus C=C^1\mathfrak{g}$  and the form $\omega$ is nondegenerate on $B$,
\item $[A,A]\subset B\oplus C$,\ $[A,B\oplus C]\subset C$ and the form $\omega$ vanishes on $A$ and $C$.
\end{itemize}

The basis $\{e_1,\dots, e_6\}$ of $\mathfrak{g}$ is chosen so that $\{e_1,e_2\}$, $\{e_3,e_4\}$ and $\{e_5,e_6\}$ are bases of subspaces $A$, $B$ and $C$ respectively. If the ideals $C^1\mathfrak{g}$ and $\mathfrak{b}_1$ are $J$-invariant, then $J$ has the following block type:
\begin{equation} \label{J6Bl}
J_0=  \left( \begin {array}{cccccc} \psi_{11}&\psi_{12}&0&0&0&0 \\
\psi_{21}&\psi_{22}&0&0&0&0\\
\psi_{31}&\psi_{32}&\psi_{33}&\psi_{34} &0&0\\
\psi_{41}&\psi_{42}&\psi_{43}&\psi_{44} &0&0\\
\psi_{51}&\psi_{52}&\psi_{53}&\psi_{54} &\psi_{55}&\psi_{56}\\
\psi_{61}&\psi_{62}&\psi_{63}&\psi_{64} &\psi_{65}&\psi_{66}
\end {array} \right).
\end{equation}

The subspace $W\subset \mathfrak{g}$ is called $\omega$-\emph{isotropic} if and only if $\omega(W,W)=0$. We will call subspaces $U,V\subset \mathfrak{g}$  $\omega$-\emph{dual} if, for any vector $X\in U$, there is a vector $Y\in V$ such that $\omega(X,Y)\ne 0$ and, on the contrary, $\forall Y\in V$, $\exists X\in U$, что $\omega(X,Y)\ne 0$.	

\begin{theorem} \label{246}
Let the six-dimensional symplectic Lie algebra $(\mathfrak{g}, \omega)$ with the compatible almost complex structure $J$ have an expansion on three two-dimensional subspaces
\begin{equation}
\mathfrak{g}=A\oplus B \oplus C,
\label{abcAK}
\end{equation}
where $B \oplus C = C^1\mathfrak{g}$ and $C$ are $J$-invariant abelian ideals. We will assume that the subspaces $A$ and $C$ are $\omega$-isotropic and $\omega$-dual, and that on the subspace $B$ the form of $\omega$ is nondegenerate and $\omega(B,C)=0$. Then for any complex structure $J$ compatible with $\omega$, and with the Levi-Civita connection $\nabla$ of the associated pseudo-Riemannian metric $g=\omega \cdot J$, the following properties hold:
\begin{itemize}
\item $g(B\oplus C, C)=0;$
\item $\nabla_X Y,\, \nabla_Y X\in B \oplus C,\quad \forall X\in \mathfrak{g}$, $\forall  Y\in B\oplus C$,
\item $\nabla_X Y,\,\nabla_Y X\in C,\quad \forall X\in \mathfrak{g},\ Y\in C$,
\item  $\nabla_X Y =\nabla_Y X\in C,\quad \forall X,Y\in B\oplus C$.
\item  $\nabla_X Y=\nabla_Y X=0,\quad \forall X\in B\oplus C,\ Y\in C$.
\end{itemize}
\end{theorem}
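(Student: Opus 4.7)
The plan is to prove the five assertions in the order given, since (1) fixes the $g$-orthogonal structure needed for (2)--(5). The essential computational tool throughout will be the Koszul formula \eqref{nabla}; the structural ingredients I will lean on are the $J$-invariance of $C$ (so that $JC=C$ and $g$-products into $C$ reduce to $\omega$-products), the fact that $B\oplus C = C^1\g$ is \emph{abelian} (so $[B\oplus C, B\oplus C]=0$), and the ideal property $[\g,C]\subseteq C$.

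I will begin with (1). Since $JC=C$, we have $g(B\oplus C,C) = \omega(B\oplus C, JC) = \omega(B\oplus C, C)$, which vanishes because $C$ is $\omega$-isotropic and $\omega(B,C)=0$. A dimension count together with nondegeneracy of $g$ then yields $(B\oplus C)^{\perp_g}=C$ and $C^{\perp_g}=B\oplus C$; this identification is what turns each of the covariant-derivative assertions into a finite bookkeeping of three Koszul summands.

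For (2), I will expand $2g(\nabla_X Y, W)$ with $W\in C=(B\oplus C)^{\perp_g}$. The three summands vanish respectively because $[X,Y]\in C^2\g\subseteq B\oplus C$ pairs trivially with $W$ via (1), because $[W,X]\in C$ pairs trivially with $Y\in B\oplus C$ again via (1), and because $[W,Y]\in [C,C^1\g]=0$ by the abelian hypothesis. The analogous computation for (3), now with $W\in B\oplus C=C^{\perp_g}$ and the roles of $C$ and $B\oplus C$ interchanged, dispatches that claim as well. The partner statements for $\nabla_Y X$ follow at once from torsion-freeness $\nabla_Y X - \nabla_X Y = [Y,X]$, since $[Y,X]$ already lies in the target subspace in each case.

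Claim (4) is then essentially free: the abelian hypothesis gives $[X,Y]=0$, hence $\nabla_X Y=\nabla_Y X$, and rerunning the Koszul check against $W\in B\oplus C$ with all three brackets now identically vanishing forces $\nabla_X Y\in C$. For (5), I will pair $2g(\nabla_X Y, W)$ against an arbitrary $W\in\g$: the first Koszul term is zero by the abelian hypothesis, while the second and third collapse via (1) together with $[\g,C]\subseteq C$, so nondegeneracy of $g$ gives $\nabla_X Y=0$, and hence also $\nabla_Y X = \nabla_X Y + [Y,X] = 0$. I expect no real obstacle beyond tracking which structural hypothesis each Koszul summand needs; once (1) and the orthogonal-complement identification are in place, the rest is mechanical.
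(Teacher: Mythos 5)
Your proof is correct and follows essentially the same route as the paper's: the Koszul formula \eqref{nabla} applied term by term, with each summand killed by the orthogonality $g(B\oplus C,C)=0$, the ideal property of $C$, and the commutativity of $C^1\g$. The only difference is organizational: where the paper detects unwanted components by producing test vectors $JZ$ via the $\omega$-duality of $A$ and $C$ and the nondegeneracy of $\omega|_B$, you identify $(B\oplus C)^{\perp_g}=C$ and $C^{\perp_g}=B\oplus C$ by a dimension count and pair directly against those complements --- a slightly cleaner bookkeeping of the same argument.
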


\begin{proof}
The first property is obvious. Let us prove the second property.
Let $X\in \mathfrak{g}$,\ $Y\in B\oplus C$. If $\nabla_X Y$ has a non-zero component from $A$ then there is a vector $JZ\in C$, such that $\omega(\nabla_X Y,JZ)\ne 0$. On the other hand,
$$
2\omega(\nabla_X Y,JZ) = 2g(\nabla_X Y,Z) = g([X,Y],Z) +g([Z,X],Y) +g([Z,Y],X) =
$$
$$
=\omega([X,Y],JZ) +\omega([Z,X],JY) +\omega(X,J[Z,Y]) = 0,
$$
as $[X,Y]\in C$, $JZ\in C$ and $\omega(C,C)=0$; $[Z,X]\in C$, $JY\in B\oplus C$ and $\omega(B\oplus C,C)=0$; $[Z,Y]=0$ from the commutability condition $B\oplus C$.

Let us consider the third property. Let $X\in \mathfrak{g}$,\ $Y\in C$. Then, from the second property we see that: $\nabla_X Y\in B\oplus C$. If $\nabla_X Y$ has a non-zero component from $B$ then there is a vector $JZ\in B$, such that $\omega(\nabla_X Y,JZ)\ne 0$. On the other hand,
$2\omega(\nabla_X Y,JZ) = 2g(\nabla_X Y,Z) = g([X,Y],Z) +g([Z,X],Y) +g([Z,Y],X) = 0$, as $[X,Y],\, Y\in C$ and $Z,\, [Z,X]\in B\oplus C$, and $g(B\oplus C,C) =0$, and from the commutability of ideal $B\oplus C$.

Now let $X,\, Y\in B\oplus C$. Then $\nabla_X Y\in B\oplus C$. If $\nabla_X Y$ has a non-zero component from $B$ then there is a vector $JZ\in B$, such that $\omega(\nabla_X Y,JZ)\ne 0$.  On the other hand,
$2\omega(\nabla_X Y,JZ) = 2g(\nabla_X Y,Z) = g([X,Y],Z) +g([Z,X],Y) +g([Z,Y],X) =0$, as  $Z\in B\oplus C$, and from the commutability of ideal $B\oplus C$.

Last statement. Let  $X\in B\oplus C$ and $Y\in C$. Then $\nabla_X Y\in C$. If $\nabla_X Y \ne 0$, then there is a vector $JZ\in A$, such that $\omega(\nabla_X Y,JZ)\ne 0$. On the other hand,  $2\omega(\nabla_X Y,JZ) = 2g(\nabla_X Y,Z) = g([X,Y],Z) +g([Z,X],Y) +g([Z,Y],X) = 0$, as ideal $B\oplus C$ is commutative, $g(B\oplus C,C) =0$ and $[Z,Y]\in C$.
\end{proof}

\begin{corollary} \label{Ric}
Under the suppositions of Theorem \ref{246} the following properties are fulfilled:
\begin{itemize}
\item if one of the three vectors $X,Y,Z$ lies in $B\oplus C$, then  $R(X,Y)Z\in B\oplus C$,
\item if two of the three vectors  $X,Y,Z$ lie in $B\oplus C$, then  $R(X,Y)Z\in C$,
\item if one of the vectors $X, Y, Z$ lies in ideal $C$, then  $R(X,Y)Z=0$.
\end{itemize}
\end{corollary}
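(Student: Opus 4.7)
The overall strategy is to expand
\[
R(X,Y)Z=\nabla_X\nabla_Y Z-\nabla_Y\nabla_X Z-\nabla_{[X,Y]}Z
\]
and apply the five invariance rules for $\nabla$ established in Theorem \ref{246} term by term, using multilinearity of both $R$ and $\nabla$ to split each argument according to $\mathfrak{g}=A\oplus B\oplus C$. Two standing observations drive the bookkeeping: $[X,Y]\in C^{1}\mathfrak{g}=B\oplus C$ for all $X,Y\in\mathfrak{g}$, and in fact $[X,Y]\in C$ whenever at least one of the two factors already lies in $B\oplus C$, because $B\oplus C$ is abelian and $[A,B\oplus C]\subset C$.

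For the first bullet, the antisymmetry $R(X,Y)=-R(Y,X)$ lets me reduce to the case in which either $X$ or $Z$ lies in $B\oplus C$; the second property of Theorem \ref{246} then traps each of the three terms of $R(X,Y)Z$ inside $B\oplus C$, since $\nabla_Y Z$, $\nabla_X Z$ and $\nabla_{[X,Y]}Z$ all lie in $B\oplus C$ and one more application of $\nabla$ preserves $B\oplus C$. The second bullet is a strengthening: with two of $X,Y,Z$ in $B\oplus C$, the fourth property sends the inner covariant derivative into $C$, the third property keeps it there under subsequent differentiation, and $[X,Y]\in C$ forces $\nabla_{[X,Y]}Z\in C$ by the third property as well.

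The substantive content lies in the third bullet. Assume $Z\in C$ (the other placements follow by antisymmetry). Write $X=X_A+X_{BC}$ and $Y=Y_A+Y_{BC}$ and expand $R$ by multilinearity. Each cross term carrying an $X_{BC}$ or $Y_{BC}$ factor is annihilated by the fifth property: $\nabla_{X_{BC}}Z=0$ and $\nabla_{X_{BC}}(\nabla_{Y_A}Z)=0$ because $\nabla_{Y_A}Z\in C$, while $[X_{BC},Y_A]\in C$ gives $\nabla_{[X_{BC},Y_A]}Z=0$ as well; the same argument handles $R(X_{BC},Y_{BC})Z$ and $R(X_A,Y_{BC})Z$. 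What survives is $R(X_A,Y_A)Z$, and here property 5 again kills $\nabla_{[X_A,Y_A]}Z$ because $[X_A,Y_A]\in B\oplus C$, leaving
\[
R(X_A,Y_A)Z=\nabla_{X_A}\nabla_{Y_A}Z-\nabla_{Y_A}\nabla_{X_A}Z\in C
\]
to be shown to vanish.

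This cancellation is the main obstacle, since the five rules of Theorem \ref{246} only pin these two terms down to $C$. The cleanest route is to verify that under the hypotheses of Theorem \ref{246} the ideal $C$ actually sits in the center $\mathcal{Z}$, in which case Lemma \ref{nablaXY} makes every covariant derivative into or out of $C$ vanish and the bullet is immediate. In the cases where one cannot argue $C\subset\mathcal{Z}$ abstractly, I would instead compute $2g(\nabla_{X_A}\nabla_{Y_A}Z,W)$ for $W\in A$ from the Koszul formula, rewrite everything via $g(\cdot,\cdot)=\omega(\cdot,J\cdot)$, and exploit the $J$-invariance of $C$, the $\omega$-isotropy and $\omega$-duality of $A$ and $C$, and the cocycle identity $d\omega=0$ to antisymmetrize the resulting expression in $X_A,Y_A$ into a Jacobi combination that must vanish.
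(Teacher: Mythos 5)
Your treatment of the first two bullets is correct and complete: term-by-term application of properties 2--5 of Theorem \ref{246} to the three summands of $R(X,Y)Z$, together with the observations that $[\mathfrak{g},\mathfrak{g}]\subseteq B\oplus C$ and that $[X,Y]\in C$ as soon as one factor lies in $B\oplus C$, does confine $R(X,Y)Z$ to $B\oplus C$, respectively to $C$. This is exactly the argument the paper compresses into its one-sentence proof.

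The third bullet, however, is not closed, and you have correctly located the point of failure: after the reductions one is left with $\nabla_{X_A}\nabla_{Y_A}Z-\nabla_{Y_A}\nabla_{X_A}Z$, which the five properties of Theorem \ref{246} only place inside $C$ without killing it. Your primary repair --- verifying $C\subseteq\mathcal{Z}$ and invoking Lemma \ref{nablaXY} --- is unavailable in most of the cases where the corollary is actually applied: for $G_1$, $G_2$, $G_3$ the center is $\mathcal{Z}=\mathbb{R}\{e_6\}$ while $C=C^3\mathfrak{g}=\mathbb{R}\{e_5,e_6\}$, and indeed $[e_1,e_5]=e_6\neq 0$; the same obstruction occurs for $G_4$, $G_5$, $G_7$--$G_9$. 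Your fallback --- a Koszul-formula computation ``antisymmetrized into a Jacobi combination that must vanish'' --- is only announced, not carried out; nothing in the proposal shows the resulting expression really is such a combination, and the algebraic curvature symmetries cannot supply the vanishing on their own, since with $\dim A=2$ the pair symmetry and first Bianchi identity only reproduce the antisymmetry of $g(R(X,Y)\cdot,\cdot)$ in its last two slots. There is also a smaller slip: ``the other placements follow by antisymmetry'' only moves a $C$-vector between the first two arguments of $R$; to transfer the case $Z\in C$ to the case $X\in C$ you need the pair symmetry $g(R(X,Y)Z,W)=g(R(Z,W)X,Y)$ together with nondegeneracy of $g$, not mere antisymmetry. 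In fairness, the paper itself offers no more than the assertion that the statement ``follows at once'' and in practice leans on the Maple computations reported for each group; but as a self-contained argument your proof of the third bullet has a genuine gap.
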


\begin{proof}
This follows at once from the formula for a curvature tensor
\[
R(X,Y)Z=\nabla_X \nabla_Y Z -\nabla_Y \nabla_X Z -\nabla_{[X,Y]}Z
\]
and from Lemma \ref{nablaXY}.
\end{proof}

\begin{corollary} \label{Ric}
Under the suppositions of Theorem \ref{246} the Ricci tensor $Ric(X,Y)=0$ if one of the vectors $X,Y$ lies in $B\oplus C$.
\end{corollary}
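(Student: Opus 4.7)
The plan is to compute the Ricci tensor directly from its definition as a trace, $Ric(X,Y)=\operatorname{tr}\bigl(Z\mapsto R(Z,X)Y\bigr)$, expressed in the basis adapted to the decomposition $\g=A\oplus B\oplus C$. Choose $\{e_1,e_2\}$, $\{e_3,e_4\}$, $\{e_5,e_6\}$ as bases of $A$, $B$, $C$ respectively, and let $(V)^i$ denote the $i$-th coordinate of a vector $V\in\g$ in this basis, so that
\[
Ric(X,Y)=\sum_{i=1}^{6}\bigl(R(e_i,X)Y\bigr)^{i}.
\]

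First I would treat the case $X\in B\oplus C$ and split the sum according to which block $e_i$ lies in. For $i=1,2$ we have $e_i\in A$, so among $(e_i,X,Y)$ only $X$ is guaranteed to be in $B\oplus C$; by the first bullet of the preceding Corollary (on the curvature tensor), $R(e_i,X)Y\in B\oplus C$, so its $A$-component $(R(e_i,X)Y)^{i}$ vanishes. For $i=3,4$ both $e_i$ and $X$ lie in $B\oplus C$, so by the second bullet $R(e_i,X)Y\in C$, and the $B$-components $(R(e_i,X)Y)^{i}$ again vanish. For $i=5,6$ the vector $e_i$ lies in $C$, and by the third bullet $R(e_i,X)Y=0$ altogether. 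Summing, every contribution is zero and $Ric(X,Y)=0$.

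The case $Y\in B\oplus C$ is handled by the same trichotomy on $i$: the hypothesis of each bullet of the preceding Corollary is symmetric in how many of the three arguments lie in $B\oplus C$ (or, in the third bullet, in $C$), so the same three-way split on $e_i\in A$, $e_i\in B$, $e_i\in C$ forces the $i$-th diagonal term of the trace to vanish.

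There is no real obstacle here; the entire content is packaged in the preceding Corollary, and the only thing to be careful about is the bookkeeping of the case split on the trace index $i$ and the observation that in each case the $i$-th coordinate of $R(e_i,X)Y$ is forced to lie in a subspace complementary to $\R e_i$ (namely $B\oplus C$ for $i\in\{1,2\}$, $C$ for $i\in\{3,4\}$, and $\{0\}$ for $i\in\{5,6\}$).
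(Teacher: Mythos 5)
Your proof is correct and follows essentially the same route as the paper: choose the basis adapted to $\g=A\oplus B\oplus C$, write the Ricci tensor as the trace $Ric_{ij}=\sum_k R_{k,i,j}^k$, and kill each diagonal term of the trace by the three bullets of the preceding corollary according to whether $e_k$ lies in $A$, $B$ or $C$. The only (harmless) difference is that you treat the cases $X\in B\oplus C$ and $Y\in B\oplus C$ separately and explicitly, whereas the paper verifies only the case $j\in\{3,\dots,6\}$ and leaves the other to the symmetry of the Ricci tensor.
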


\begin{proof}
Let us choose a base $e_1,\dots, e_6$ according to expansion (\ref{abcAK}), i.e. so that $A=\mathbb{R}\{e_1,e_2\}$, $B=\mathbb{R}\{e_3,e_4\}$, $C=\mathbb{R}\{e_5,e_6\}$. The Ricci tensor is defined as the convolution of the first and upper indexes of the curvature tensor $Ric_{ij}=\sum_k R_{k,i,j}^k$. It is necessary to show that $Ric_{ij}=0$ if the index $j$ has the values $3,4,5,6$.

Let us calculate $R_{k,i,j}^s$ for $k=1,2$. For example, we have $R(e_1,e_i)e_3\in B\oplus C$.
Therefore $R_{k,i,j}^k=0$, when $k=1,2$, а $j\in \{3,4,5,6\}$.

Let us calculate $R_{k,i,j}^s$ for $k=3,4$. For example, we have $R(e_3,e_i)e_4\in C$.
Therefore $R_{k,i,j}^k=0$, when $k=3,4$, а $j\in \{3,4,5,6\}$.

Let us calculate $R_{k,i,j}^s$ for $k=5,6$. For example, we have $R(e_5,e_3)e_i =0$.
Therefore $R_{k,j,i}^k=0$, when $k=5,6$, а $j\in \{3,4,5,6\}$. Therefore $Ric_{ij}=0$ if the index $j$ has values $3,4,5,6$.
\end{proof}

 \vspace{1mm}
\textbf{Remark 2.} Theorem \ref{246} and Corollary \ref{Ric} imply that the Ricci tensor is generally speaking not zero, but has a block type with one, probably non-zero, left upper block of an order 2. The scalar curvature is equal to zero
 \vspace{1mm}

\begin{corollary} \label{RicPar}
In the conditions of Theorem \ref{246} the Ricci tensor $Ric(X,Y)$ of the associated metric $g(X,Y)=\omega(X,JY)$ does not depend on the free parameters $\psi_{i1}$, $\psi_{i2}$, $i=3,4,5,6$ and $\psi_{i3}$, $\psi_{i4}$, $i=5,6$.
\end{corollary}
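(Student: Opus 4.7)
By Corollary \ref{Ric} (Ricci version), $Ric(X,Y)=0$ whenever $X$ or $Y$ lies in $B\oplus C$, so it suffices to show that the entries $Ric(e_i,e_j)$ with $i,j\in\{1,2\}$ are independent of the listed parameters. Expand these as
\[
Ric(e_i,e_j)=\sum_{k=1}^{6}\bigl(R(e_k,e_i)e_j\bigr)^k,
\]
where $(V)^k$ denotes the $e_k$-coefficient. By the curvature version of Corollary \ref{Ric}, the $k=5,6$ summands vanish identically (since $e_k\in C$ forces $R(e_k,e_i)e_j=0$), and the $k=3,4$ summands reduce to the $B$-component of $R(e_k,e_i)e_j$ (which lies in $B\oplus C$). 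Thus only the $A$-parts of $R(e_k,e_i)e_j$ for $k=1,2$ and the $B$-parts for $k=3,4$ can contribute to the trace.

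The next step is to express these components through the Koszul identity
\[
2\omega(\nabla_XY,JZ)=\omega([X,Y],JZ)+\omega([Z,X],JY)+\omega([Z,Y],JX)
\]
together with the $\omega$-structure of $A\oplus B\oplus C$ (non-degenerate pairings on $A\times C$ and on $B\times B$; vanishing on $A\times A$, $B\times C$ and $C\times C$) and the bracket constraints $[\g,\g]\subset B\oplus C$, $[\g,B\oplus C]\subset C$, $[B\oplus C,B\oplus C]=0$. Expanding $R(e_k,e_i)e_j=\nabla_{e_k}\nabla_{e_i}e_j-\nabla_{e_i}\nabla_{e_k}e_j-\nabla_{[e_k,e_i]}e_j$ and extracting the required $A$- and $B$-components via $\omega$-pairings against $C$ and $B$, respectively, rewrites each Ricci entry as a finite combination of $\omega$-values of brackets paired with $J$-images of the basis vectors.

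The listed parameters $\psi_{i1},\psi_{i2}$ ($i=3,\dots,6$) and $\psi_{i3},\psi_{i4}$ ($i=5,6$) are precisely the subdiagonal block entries of $J$ in the $A\oplus B\oplus C$ decomposition: they encode the $B\oplus C$-components of $Je_1,Je_2$ and the $C$-components of $Je_3,Je_4$. Two mechanisms remove them from the Ricci trace. First, any $C$-part of $JZ$ (i.e.\ a parameter with $i=5,6$) enters the Koszul formula only through $\omega([\,\cdot\,,\,\cdot\,],JZ)$, where the bracket lies in $B\oplus C$; by $\omega(B\oplus C,C)=0$ the contribution vanishes, which disposes of all eight parameters with $i=5,6$. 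Second, the remaining four parameters $\psi_{31},\psi_{32},\psi_{41},\psi_{42}$ (the $B$-parts of $Je_1,Je_2$) do enter individual covariant derivatives; their independence from $Ric$ must be established by tracking them through the curvature expansion and observing that the contributions cancel in the sum over $k$.

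The main obstacle is exactly this last cancellation: it is not detected by an orthogonality argument like the first mechanism, but must be verified by explicitly organising the dependence of each $R(e_k,e_i)e_j$ on $\psi_{31},\psi_{32},\psi_{41},\psi_{42}$ and checking that the Ricci sum is parameter-free. As with the other curvature computations in the paper, this is most naturally carried out by symbolic calculation in Maple.
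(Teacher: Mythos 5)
The paper states this corollary without any proof (it is implicitly backed by the Maple computations of Section 3), so there is no argument of record to measure yours against; your reduction of $Ric$ to the $2\times 2$ block over $A$ and your identification of which components of $R(e_k,e_i)e_j$ survive in the trace are correct and consistent with the paper's proof of the preceding corollary. The genuine gap is in your ``first mechanism.'' It is true that in the form $2\omega(\nabla_XY,JZ)=\omega([X,Y],JZ)+\omega([Z,X],JY)+\omega([Z,Y],JX)$ the right-hand side involves no $C$-components of $JX,JY,JZ$, since every bracket lies in $B\oplus C$ and $\omega(B\oplus C,C)=0$. But this identity only gives you the pairings $g(\nabla_XY,Z)=\omega(\nabla_XY,JZ)$; to recover the \emph{vector} $\nabla_XY$ you must let $JZ$ run over a basis, i.e.\ substitute $Z=-JW$, and then $JW$ re-enters inside the brackets: $2\omega(\nabla_XY,W)=\omega([X,Y],W)-\omega([JW,X],JY)-\omega([JW,Y],JX)$. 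Since in Theorem \ref{246} the ideal $C$ is only abelian, not central, the $C$-part of $JW$ produces a nonzero bracket $[J_CW,X]\in C$ which pairs nontrivially with the $A$-part of $JY$ through the dual pairing $\omega(A,C)$. Equivalently, in your form of Koszul the third term is $g(X,[Z,Y])=\omega(X,J[Z,Y])$, which depends on the $C$-part of $J$ applied to the $B$-part of $[Z,Y]$ --- exactly the entries $\psi_{53},\psi_{54},\psi_{63},\psi_{64}$. A hand computation for $G_1$ confirms this: the $e_3$-component of $\nabla_{e_1}e_1$ contains the term $-\psi_{54}\psi_{11}/t$, which via the compatibility relation $\psi_{54}=\psi_{32}t/(t-1)$ depends on the listed free parameter $\psi_{32}$. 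The orthogonality argument eliminates only the rows corresponding to the \emph{center} (that is precisely Lemma \ref{ParRiem}); it does not dispose of rows $5,6$ here.

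Two consequences follow. First, your clean split of the twelve parameters into ``eight killed by orthogonality'' and ``four needing cancellation'' does not survive: the compatibility condition ties the $i=5,6$ entries to the $i=3,4$ ones, and the $B$-components of the covariant derivatives (hence the $k=3,4$ contributions to the Ricci trace) genuinely depend on the listed parameters. What the orthogonality argument does prove is the weaker, still useful fact that the $A$-components of $\nabla_XY$ (extracted by pairing against $C$, where $Z\in C$ forces $[Z,X],[Z,Y]\in C$) involve only the $A$-block and $C$-block of $J$. Second, since your remaining mechanism is explicitly deferred to Maple, essentially the entire content of the corollary --- not just a third of it --- rests on the computational verification of a cancellation in the trace. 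That puts you no worse off than the paper, which offers no proof at all, but you should either carry out the trace cancellation explicitly or present the whole statement as computationally verified rather than claiming that eight of the parameters are eliminated a priori.
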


 \vspace{1mm}
\textbf{Remark 3.}
If on a six-dimensional nilpotent Lie algebra $\g$ an almost nilpotent almost complex structure $J$ is chosen with a sequence of $J$-invariant ideals $\mathfrak{b}_1\subset\mathfrak{b}_2\subset \mathfrak{g}$ of dimensions $(2,4,6)$, then such Lie algebra is decomposed into the direct sum of two-dimensional subspaces:
$$
\mathfrak{g}=A\oplus B\oplus \mathfrak{b}_1,
$$
where $B \oplus \mathfrak{b}_1=\mathfrak{b}_2$ and $[A,A]\subset B \oplus \mathfrak{b}_1$.  On the majority of Lie groups under consideration the symplectic form and almost complex structure satisfy the conditions of Theorem \ref{246} for this expansion.

\section{Almost pseudo-K\"{a}hler Lie groups}
In this part we will consider in detail each six-dimensional symplectic Lie group which does not suppose left-invariant compatible complex structures, but supposes compatible almost nilpotent almost complex structures and, thus, is an almost pseudo-K\"{a}hler Lie group.

\subsection{The Lie group $G_{1}$}
Let us consider a six-dimensional symplectic Lie group $G_{1}$ which has a Lie algebra $\g_1$  with non-trivial Lie brackets \cite{Goze-Khakim-Med}:

$[e_1,e_2] = e_3$, $[e_1,e_3] = e_4$,  $[e_1,e_4] = e_5$, $[e_1,e_5] = e_6$, $[e_2,e_3] = e_5$, $[e_2,e_4] = e_6$,
\vspace{1mm}\\
The sequence of ideals is:
$C^1\g= \R\{e_3,e_4,e_5,e_6\}$,
$C^2\g=\R\{e_4,e_5,e_6\}$,
$C^3\g=\R\{e_5,e_6\}$,
$C^4\g=\R\{e_6\}=\mathcal{Z}$, and the last is the Lie algebra center.
This Lie algebra is filiform and consequently does not suppose an integrable complex structure \cite{Goze-E}.
There is an increasing series of ideals: $\g_1=\mathcal{Z}=\R\{e_6\}$,\, $\g_2=\R\{e_5,e_6\}$,\,
$\g_3=\R\{e_4,e_5,e_6\}$,\, $\g_4=\R\{e_3,e_4,e_5,e_6\}$,\,
$\g_5=\g$. The Lie algebra type is (1,2,3,4,6).

The symplectic structure is defined by the 2-form \cite{Goze-Khakim-Med}:
$$
\omega = e^1\wedge e^6 +(1-t)e^2\wedge e^5  + te^3\wedge e^4, \ \mbox{ где }  t\ne 0 \ \mbox{ и } t\ne 1.
$$

This Lie algebra has two even-dimensional ideals $C^1\g$ and $C^3\g$ which allow us to present a Lie algebra in the form of the direct sum $\mathfrak{g}=A\oplus B \oplus C$, satisfying the conditions of Theorem \ref{246}: $C= C^3\g$, $B\oplus C = C^1\g$ and $A=\{e_1,e_2\}$, $B=\{e_3,e_4\}$,  $C=\{e_5,e_6\}$.

If we require from an almost complex structure $J$ that it is compatible with a symplectic structure $\omega$ and that the ideals $C^1\g$ and $C^3\g$ are $J$-invariant, then we obtain the following almost nilpotent almost complex structure:
$$
J= \left[ \begin {array}{cccccc} {\it \psi_{11}}&{\it \psi_{12}}&0&0&0&0
\\ \noalign{\medskip}{\it \psi_{21}}&{\it \psi_{22}}&0&0&0&0
\\ \noalign{\medskip}{\it \psi_{31}}&{\it \psi_{32}}&{\it \psi_{33}}&{\it \psi_{34}}&0
&0\\ \noalign{\medskip}{\it \psi_{41}}&{\it \psi_{42}}&{\it \psi_{43}}&-{\it \psi_{33}
}&0&0\\ \noalign{\medskip}{\it \psi_{51}}&{\it \psi_{52}}&-{\frac {{\it \psi_{42}}
\,t}{t-1}}&{\frac {{\it \psi_{32}}\,t}{t-1}}&-{\it \psi_{22}}&{\frac {{\it
\psi_{12}}}{t-1}}\\ \noalign{\medskip}{\it \psi_{61}}&-{\it \psi_{51}}\, \left( t-
1 \right) &{\it \psi_{41}}\,t&-{\it \psi_{31}}\,t& \left( t-1 \right) {\it
\psi_{21}}&-{\it \psi_{11}}\end {array} \right].
$$
In this case the associated metric will be pseudo-Riemannian. We can note that the Riemannian associated metric also exists, but that the ideal $C^1\g$ will not be $J$-invariant. For example, if we suppose that $J(e_1)=e_6$, $J(e_2)=e_5$, $J(e_3)=e_4$ then for $t\in (0,1)$ we obtain the Riemannian associated metric $g(X,Y)=\omega(X,JY)$ and the almost K\"{a}hler structure $(g,J,\omega)$.

This Lie algebra has the one-dimensional center $\mathcal{Z}=\mathbb{R}\{e_6\}$. According to Lemma \ref{ParRiem} and Corollary \ref{ParCurvTensor} the curvature tensor does not depend on the free parameters $\psi_{6,i}$. Therefore we assume that $\psi_{61} =0$.

The condition $J^2=-1$ gives a complicated expression for the almost complex structure that depends on seven parameters (under the condition that $\psi_{61} =0$). Nevertheless, the curvature tensor $R_{ijk}^s$ is easily calculated (on Maple), but has many non-zero and complicated components. The scalar square $g(R,R)=0$ of the curvature tensor $R$ is equal to zero.

The Ricci tensor $Ric_{jk}=R_{sjk}^s$ has a block appearance with one non-zero left upper block of an order 2:
$$
Ric=-\frac {1}{2} \left[ \begin {array}{cccccc} {\frac {{{\it \psi_{11}}}^{2}{{\it \psi_{12}}}^{2}+{t}^{2}{{\it \psi_{34}}}^{2}-2\,t{{\it \psi_{34}}}^{2}+{{\it \psi_{34}
}}^{2}}{ \left( t-1 \right) ^{2}}}&{\frac {{\it \psi_{11}}\,{{\it
\psi_{12}}}^{3}}{ \left( t-1 \right) ^{2}}}&0&0&0&0\\ \noalign{\medskip}{\frac {{\it \psi_{11}}\,{{\it \psi_{12}}}^{3}}{ \left( t-1 \right) ^{2}}}&{\frac {{{\it \psi_{12}}}^{4}}{ \left( t-1 \right) ^{2}}}&0&0&0&0
\\ \noalign{\medskip}0&0&0&0&0&0\\ \noalign{\medskip}0&0&0&0&0&0
\\ \noalign{\medskip}0&0&0&0&0&0\\ \noalign{\medskip}0&0&0&0&0&0
\end {array} \right].
$$
This depends on three parameters $\psi_{11}$, $\psi_{12}\ne 0$ and $\psi_{34}\ne 0$.
The scalar curvature $S$ is equal to zero.

Under the condition that
$\psi_{34}=\pm \frac{\psi_{12}}{t-1}$ the Ricci tensor is $J$-Hermitian, $Ric(JX,JY)=Ric(X,Y)$. We suppose that $\psi_{34}=+\frac{\psi_{12}}{t-1}$. Then (irrespective of the sign) the Ricci tensor has a block appearance with one non-zero left upper block $Ric_{2\times 2}$:
$$
Ric_{2\times 2}= -\frac 12 \left[ \begin {array}{cc} {\frac { \left( 1+{{\it \psi_{11}}}^{2} \right) {{\it \psi_{12}}}^{2}}{ \left( t-1 \right) ^{2}}}& {\frac {{\it \psi_{11}}\,{{\it \psi_{12}}}^{3}}{ \left( t-1 \right) ^{2}}} \\
\noalign{\medskip}{\frac {{\it \psi_{11}}\,{{\it \psi_{12}}}^{3}}{
 \left( t-1 \right) ^{2}}}&{\frac {{{\it \psi_{12}}}^{4}}{ \left(t-1\right) ^{2}}}
\end {array} \right].
$$
For a canonical almost pseudo-K\"{a}hler structure, we set all parameters on which the Ricci tensor does not depend equal to zero. In addition, we will require that the Ricci tensor of the associated metric has the $J$-Hermitian property. Then we obtain the following expressions for a canonical almost complex structure and associated almost pseudo-K\"{a}hler metric with a Hermitian Ricci tensor:
\[
J(e_2)= \psi_{12}\, e_1 -\psi_{11}\, e_2,\qquad
J(e_4)= \frac{\psi_{12}}{t-1}\, e_3,\qquad
J(e_6)= \frac{\psi_{12}}{t-1}\, e_5 -\psi_{11}\, e_6.
\]
$$
g=-\left[ \begin {array}{cccccc} 0&0&0&0&{\frac { \left( t-1 \right)  \left( 1+{{\it \psi_{11}}}^{2} \right) }{{\it \psi_{12}}}}& {\it \psi_{11}}
\\ \noalign{\medskip}0&0&0&0& \left( t-1 \right) {\it \psi_{11}}& {\it
\psi_{12}}\\ \noalign{\medskip}0&0& {\frac { \left( t-1 \right) t}{{\it
\psi_{12}}}}&0&0&0\\ \noalign{\medskip}0&0&0& {\frac {t{\it \psi_{12}}}{t-1}}&0 &0\\
\noalign{\medskip} {\frac { \left( t-1 \right)  \left( 1+{{\it
\psi_{11}}}^{2} \right) }{{\it \psi_{12}}}}& \left( t-1 \right) {\it \psi_{11}}&0 &0&0&0\\
\noalign{\medskip}{\it \psi_{11}}& {\it \psi_{12}}&0&0&0&0
\end {array} \right].
$$

\subsection{The Lie group $G_{2}$}
Let us consider a six-dimensional symplectic Lie group $G_2$ which has a Lie algebra $\g_2$  with non-trivial Lie brackets \cite{Goze-Khakim-Med}:

$[e_1,e_2] = e_3$, $[e_1,e_3] = e_4$,  $[e_1,e_4] = e_5$, $[e_1,e_5] = e_6$, $[e_2,e_3] = e_6$.
\vspace{1mm}\\
The sequence of ideals is:
$C^1\g=\R\{e_3,e_4,e_5,e_6\}$,
$C^2\g=\R\{e_4,e_5,e_6\}$,
$C^3\g=\R\{e_5,e_6\}$,
$C^4\g=\R\{e_6\}=\mathcal{Z}$, and the last is the Lie algebra center.
This Lie algebra is filiform and consequently does not suppose an integrable complex structure \cite{Goze-E}.
There is an increasing series of ideals: $\g_1=\mathcal{Z}=\R\{e_6\}$,\, $\g_2=\R\{e_5,e_6\}$,\,
$\g_3=\R\{e_4,e_5,e_6\}$,\, $\g_4=\R\{e_3,e_4,e_5,e_6\}$,\,
$\g_5=\g$. The Lie algebra type is (1,2,3,4,6).

The symplectic structure is defined by the 2-form \cite{Goze-Khakim-Med}:
$$
\omega = e^1\wedge e^6 - e^2\wedge e^5 + e^2\wedge e^4 + e^3\wedge e^4.
$$
We require that the almost complex structure $J$ is $\omega$-compatible and that the ideal $C^1\g$ is $J$-invariant ($\psi_{13}=0$, $\psi_{14}=0$, $\psi_{36}=\psi_{14}$, $\psi_{16}=0$, $\psi_{23}=0$, $\psi_{24}=0$,  $\psi_{35}=-\psi_{24}$). In this case the ideal $C^3\g$ is also $J$-invariant. The almost complex structure $J$ is almost nilpotent. The associated metric will be pseudo-Riemannian. In addition, we set the free parameters corresponding to the center $\mathcal{Z}$ to be equal to zero, $\psi_{61}=0$. We note that the Riemannian associated metric also exists, but the ideal $C^1\g$ will not be $J$-invariant.

The condition $J^2=-1$ gives a complicated expression for the almost complex structure that depends on seven parameters (under the condition that $\psi_{61} =0$). Nevertheless, the curvature tensor $R_{ijk}^s$ is easily calculated, but has many non-zero and complicated components. The scalar square of the curvature tensor $R$ is equal to zero, $g(R,R)=0$.

The Ricci tensor $Ric_{jk}=R_{sjk}^s$ has a block appearance with one non-zero left upper block $Ric_{2\times 2}$ of an order 2:
\[
Ric_{2\times 2} = -\frac 12 \left[ \begin {array}{cc} {{\it \psi_{11}}}^{2}{{\it \psi_{12}}}^{2}+ {{\it \psi_{34}}}^{2}&{\it \psi_{11}}\,{{\it \psi_{12}}}^{3}\\ \noalign{\medskip}{\it \psi_{11}}\,{{\it \psi_{12}}}^{3}& {{\it \psi_{12}}}^{4}
\end {array} \right]
\]
This depends on three parameters $\psi_{11}$, $\psi_{12}\ne 0$ and $\psi_{34}\ne 0$. The scalar $S$ curvature is equal to zero.

Under the condition $\psi_{34}^2=\psi_{12}^2$ the Ricci tensor is $J$-Hermitian, $Ric(JX,JY)=Ric(X,Y)$. Then:
\[
Ric_{2\times 2} =-\frac 12\left[ \begin {array}{cc}
{{\psi_{12}}}^{2}(1+ {{\psi_{11}}}^{2})& {\psi_{11}}\,{{\psi_{12}}}^{3}\\
\noalign{\medskip}{\psi_{11}}\,{{\psi_{12}}}^{3}& {{\psi_{12}}}^{4}
\end {array} \right].
\]
Let us assume that all remaining parameters on which the Ricci tensor does not depend are equal to zero. Then we obtain the following expressions for the canonical almost complex structure $J$ and associated almost pseudo-K\"{a}hler metric $g$ with a Hermitian Ricci tensor at $\psi_{34}=\psi_{12}$:
$$
J=  \left[ \begin {array}{cccccc} {\it \psi_{11}}&{\it \psi_{12}}&0&0&0&0
\\ \noalign{\medskip}-{\frac {1+{{\it \psi_{11}}}^{2}}{{\it \psi_{12}}}}&-{
\it \psi_{11}}&0&0&0&0\\ \noalign{\medskip}0&0&0&{\it \psi_{12}}&0&0
\\ \noalign{\medskip}0&0&-{{\it \psi_{12}}}^{-1}&0&0&0
\\ \noalign{\medskip}0&0&-{{\it \psi_{12}}}^{-1}&-{\it \psi_{11}}&{\it \psi_{11}}&
{\it \psi_{12}}\\ \noalign{\medskip}0&0&0&{\frac {1+{{\it \psi_{11}}}^{2}}{{
\it \psi_{12}}}}&-{\frac {1+{{\it \psi_{11}}}^{2}}{{\it \psi_{12}}}}&-{\it \psi_{11}}
\end {array} \right],
$$
$$
g=  \left[ \begin {array}{cccccc} 0&0&0&{\frac {1+{{\it \psi_{11}}}^{2}}{{
\it \psi_{12}}}}&-{\frac {1+{{\it \psi_{11}}}^{2}}{{\it \psi_{12}}}}&-{\it \psi_{11}}
\\ \noalign{\medskip}0&0&0&{\it \psi_{11}}&-{\it \psi_{11}}&-{\it \psi_{12}}
\\ \noalign{\medskip}0&0&-{{\it \psi_{12}}}^{-1}&0&0&0
\\ \noalign{\medskip}{\frac {1+{{\it \psi_{11}}}^{2}}{{\it \psi_{12}}}}&{\it
\psi_{11}}&0&-{\it \psi_{12}}&0&0\\ \noalign{\medskip}-{\frac {1+{{\it \psi_{11}}}
^{2}}{{\it \psi_{12}}}}&-{\it \psi_{11}}&0&0&0&0\\ \noalign{\medskip}-{\it
\psi_{11}}&-{\it \psi_{12}}&0&0&0&0\end {array} \right].
$$

\subsection{The Lie group $G_{3}$}
Let us consider a six-dimensional symplectic Lie group $G_3$ which has a Lie algebra $\g_3$  with non-trivial Lie brackets \cite{Goze-Khakim-Med}:

$[e_1,e_2] = e_3$, $[e_1,e_3] = e_4$,  $[e_1,e_4] = e_5$, $[e_1,e_5] = e_6$.
\vspace{1mm}\\
The sequence of ideals is:
$C^1\g=\R\{e_3,e_4,e_5,e_6\}$,
$C^2\g=\R\{e_4,e_5,e_6\}$,
$C^3\g=\R\{e_5,e_6\}$,
$C^4\g=\R\{e_6\}=\mathcal{Z}$, and the last is the Lie algebra center.
This Lie algebra is filiform and consequently does not suppose an integrable complex structure \cite{Goze-E}.
There is an increasing series of ideals: $\g_1=\mathcal{Z}=\R\{e_6\}$,\, $\g_2=\R\{e_5,e_6\}$,\,
$\g_3=\R\{e_4,e_5,e_6\}$,\, $\g_4=\R\{e_3,e_4,e_5,e_6\}$,\,
$\g_5=\g$. The Lie algebra type is (1,2,3,4,6).

The symplectic structure is defined by the 2-form \cite{Goze-Khakim-Med}:
$$
\omega = e^1\wedge e^6 -e^2\wedge e^5  +e^3\wedge e^4.
$$

We require that the almost complex structure $J$ is $\omega$-compatible and that the ideal $C^1\g$  is $J$-invariant. In this case the ideal $C^3\g$ is also $J$-invariant.
The almost complex structure $J$ is almost nilpotent.
The associated metric will be pseudo-Riemannian.
In addition, we set the free parameter $\psi_{61}$ corresponding to the center $\mathcal{Z}$ to be equal to zero.
The condition $J^2=-1$ gives a complicated expression for the almost complex structure that depends on seven parameters (under the condition that $\psi_{61} =0$). Nevertheless, the curvature tensor $R_{ijk}^s$ is easily calculated, but has many non-zero and complicated components. The scalar square of the curvature tensor $R$ is equal to zero, $g(R,R)=0$.
The scalar curvature $S$ is equal to zero.

The Ricci tensor has a block appearance with one non-zero left upper block of an order 2:
\[
Ric_{2\times 2} =-\frac 12\left[\begin {array}{cc} {{\psi_{11}}}^{2}{{\psi_{12}}}^{2}+{{\psi_{34}}}^{2}& {\psi_{11}}\,{{\psi_{12}}}^{3} \\ \noalign{\medskip}{\psi_{11}}\,{{\psi_{12}}}^{3}&{{\psi_{12}}}^{4}
\end {array} \right]
\]
It depends on three parameters $\psi_{11}$, $\psi_{12}\ne 0$  and $\psi_{34}\ne 0$.
Under the condition $\psi_{34}^2=\psi_{12}^2$ the Ricci tensor is $J$-Hermitian. Then:
\[
Ric_{2\times 2} = -\frac 12\left[ \begin {array}{cc}
{{\psi_{12}}}^{2}(1 +{{\psi_{11}}}^{2})&{\psi_{11}}\,{{ \psi_{12}}}^{3}\\
\noalign{\medskip}{\psi_{11}}\,{{\psi_{12}}}^{3}& {{\psi_{12}}}^{4}
\end {array} \right].
\]
Let us assume that all remaining parameters on which the Ricci tensor does not depend are equal to zero. Then we obtain the following expressions for the canonical almost complex structure $J$ and associated almost pseudo-K\"{a}hler metric $g$ with a Hermitian Ricci tensor at $\psi_{34}=\psi_{12}$:
\[
J(e_2)= \psi_{12}\, e_1 -\psi_{11}\, e_2,\qquad
J(e_4)= \psi_{12}\, e_3,\qquad
J(e_6)= \psi_{12}\, e_5 -\psi_{11}\, e_6.
\]
$$
g= -\left[ \begin {array}{cccccc} 0&0&0&0& {\frac {1+{{\it \psi_{11}}}^{2}}{{
\it \psi_{12}}}}& {\it \psi_{11}}\\
\noalign{\medskip}0&0&0&0& {\it \psi_{11}}& {
\it \psi_{12}}\\ \noalign{\medskip}0&0& {{\it \psi_{12}}}^{-1}&0&0&0
\\ \noalign{\medskip}0&0&0& {\it \psi_{12}}&0&0\\ \noalign{\medskip} {
\frac {1+{{\it \psi_{11}}}^{2}}{{\it \psi_{12}}}}& {\it \psi_{11}}&0&0&0&0
\\ \noalign{\medskip} {\it \psi_{11}}& {\it \psi_{12}}&0&0&0&0\end {array}
 \right].
$$

\subsection{The Lie group $G_{4}$}
Let us consider a six-dimensional symplectic Lie group $G_4$ which has a Lie algebra $\g_4$  with non-trivial Lie brackets \cite{Goze-Khakim-Med}:

$[e_1,e_2] = e_3$, $[e_1,e_3] = e_5$,  $[e_1,e_5] = e_6$, $[e_2,e_3] = e_4$, $[e_2,e_4] = e_6$.
\vspace{1mm}\\
This group supposes separately symplectic and complex structures, but does not suppose compatible complex structures.

The sequence of ideals is:
$C^1\g=\R\{e_3,e_4,e_5,e_6\}$,
$C^2\g=\R\{e_4,e_5,e_6\}$,
$C^3\g=\R\{e_6\}=\mathcal{Z}$, and the last is the Lie algebra center.
There is an increasing series of ideals: $\g_1=\mathcal{Z}=\R\{e_6\}$,\, $\g_2=\R\{e_4,e_5,e_6\}$,\,
$\g_3=\R\{e_3,e_4,e_5,e_6\}$,\,
$\g_4=\g$. The Lie algebra type is (1,3,4,6).

The symplectic structure is defined by the 2-form \cite{Goze-Khakim-Med}:
$$
\omega = e^1\wedge e^4 + t\,e^1\wedge e^5 +e^1\wedge e^6 +e^2\wedge e^5 +e^3\wedge e^4.
$$

We require that the almost complex structure $J$ is $\omega$-compatible and that the ideal $C^1\g$  is $J$-invariant.
In this case the ideal $\R\{e_5,e_6\}$ is also $J$-invariant.
The almost complex structure $J$ is almost nilpotent.
The associated metric will be pseudo-Riemannian.
In addition, we set the free parameters corresponding to the center $\mathcal{Z}$ to be equal to zero, $\psi_{61}=0$.
The condition $J^2=-1$ gives a complicated expression for the almost complex structure that depends on seven parameters (under the condition that $\psi_{61} =0$). Nevertheless, the curvature tensor $R_{ijk}^s$ is easily calculated, but has many non-zero and complicated components. The scalar square of the curvature tensor $R$ is equal to zero, $g(R,R)=0$.
The scalar curvature $S$ is equal to zero.

The Ricci tensor has a block appearance with one non-zero left upper block of an order 2:
\[
Ric_{2\times 2} = -\frac 12 \left[ \begin {array}{cc}
{{\psi_{12}}}^{2}{{\psi_{11}}}^{2}& {\psi_{11}}\,{{\psi_{12}}}^{3}\\ \noalign{\medskip}{\psi_{11}}\,{{\psi_{12}}}^{3}& {{\psi_{12}}}^{4}+{{\psi_{34}}}^{2}
\end {array} \right].
\]
It depends on three parameters $\psi_{11}$, $\psi_{12}\ne 0$  and $\psi_{34}\ne 0$.
The Ricci tensor is not $J$-Hermitian for any values of the parameters.

Let us assume that all remaining parameters on which the Ricci tensor does not depend are equal to zero. Then we obtain the following expressions for the canonical almost complex structure $J$ and associated almost pseudo-K\"{a}hler metric $g$:
\[
J=\left[ \begin {array}{cccccc} {\it \psi_{11}}&{\it \psi_{12}}&0&0&0&0
\\ \noalign{\medskip}-{\frac {1+{{\it \psi_{11}}}^{2}}{{\it \psi_{12}}}}&-{
\it \psi_{11}}&0&0&0&0\\ \noalign{\medskip}0&0&0&{\psi_{34}}&0&0\\
\noalign{\medskip}0&0&-{{\it \psi_{34}}}^{-1}&0&0&0
\\ \noalign{\medskip}0&0&0&-{\it \psi_{12}}&-{\it \psi_{12}}\,t+{\it \psi_{11}}&-{
\it \psi_{12}}\\ \noalign{\medskip}0&0&{{\it \psi_{34}}}^{-1}&-{\it \psi_{11}}+{
\it \psi_{12}}\,t&{\frac {-2\,{\it \psi_{12}}\,{\it \psi_{11}}\,t+1+{{\it \psi_{11}}}^
{2}+{{\it \psi_{12}}}^{2}{t}^{2}}{{\it \psi_{12}}}}&-{\it \psi_{11}}+{\it \psi_{12}}\,
t\end {array} \right]
\]
\[
g=\left[ \begin {array}{cccccc}
0&0&0&-{\psi_{11}}&-{\frac {\psi_{12}
\,\psi_{11}\,t-1-{{\psi_{11}}}^{2}}{\psi_{12}}}&-{\psi_{11}}\\
\noalign{\medskip}0&0&0&-\psi_{12}&-\psi_{12}\,t+{\psi_{11}}&-\psi_{12}\\
\noalign{\medskip}0&0&-{\psi_{34}}^{-1}&0&0&0\\
\noalign{\medskip}-{\psi_{11}}&-\psi_{12}&0&-\psi_{34}&0&0\\
\noalign{\medskip}-{\frac {\psi_{12}\,{\psi_{11}}\,t-1-{\psi_{11}}^{2}}{\psi_{12}}}&-\psi_{12}\,t+\psi_{11}&0&0&0&0\\
\noalign{\medskip}-\psi_{11}&-\psi_{12}&0&0&0&0
\end {array} \right]
\]

\subsection{The Lie group $G_{5}$}
Let us consider a six-dimensional symplectic Lie group $G_5$ which has a Lie algebra $\g_5$  with non-trivial Lie brackets \cite{Goze-Khakim-Med}:

$[e_1,e_2] = e_3$, $[e_1,e_3] = e_4$,  $[e_1,e_4] = -e_6$, $[e_2,e_3] = e_5$, $[e_2,e_5] = e_6$.\\
The sequence of ideals is:
$C^1\g=\R\{e_3,e_4,e_5,e_6\}$,
$C^2\g=\R\{e_4,e_5,e_6\}$,
$C^3\g=\R\{e_6\}=\mathcal{Z}$, and the last is the Lie algebra center.
There is an increasing series of ideals: $\g_1=\mathcal{Z}=\R\{e_6\}$,\, $\g_2=\R\{e_4,e_5,e_6\}$,\,
$\g_3=\R\{e_3,e_4,e_5,e_6\}$,\,
$\g_4=\g$. The Lie algebra type is (1,3,4,6).

The Lie group $G_5$ has 4 left-invariant symplectic structures  \cite{Goze-Khakim-Med}.

\textbf{First case.} The symplectic structure is:
$$
\omega_1 = t\,e^1\wedge e^4 +e^1\wedge e^5 +e^1\wedge e^6 +e^2\wedge e^4 +e^3\wedge e^5.
$$

We require that the almost complex structure $J$ is $\omega_1$-compatible and that the ideal $C^1\g$ is $J$-invariant. In this case the ideal $\R\{e_4,e_6\}$ is also $J$-invariant. The almost complex structure $J$ is almost nilpotent.
The associated metric will be pseudo-Riemannian.
The condition $J^2=-1$ gives a complicated expression for the almost complex structure that depends on seven parameters (under the condition that $\psi_{61}=0$). Nevertheless, the curvature tensor $R_{ijk}^s$ is easily calculated, but has many non-zero and complicated components. The scalar square $g(R,R)=0$ of the curvature tensor $R$ is equal to zero. The scalar curvature $S$ is equal to zero.

The Ricci tensor has a block appearance with one non-zero left upper block of an order 2.
\[
Ric_{2\times 2} =-\frac 12\left[ \begin {array}{cc} {{\psi_{12}}}^{2}{{\psi_{11}}}^{2
}& {\psi_{11}}\,{{\psi_{12}}}^{3}\\
\noalign{\medskip}{\psi_{11}}\,{{\psi_{12}}}^{3}& {{\psi_{12}}}^{4}+{{\psi_{35}}}^{2}
\end {array} \right].
\]
The Ricci tensor is not $J$-Hermitian for any values of the parameters.

Let us assume that all remaining parameters on which the Ricci tensor does not depend are equal to zero. Then we obtain the following expressions for the canonical almost complex structure and associated almost pseudo-K\"{a}hler metric:
$$
J=  \left[ \begin {array}{cccccc} {\psi_{11}}&{\psi_{12}}&0&0&0&0
\\ \noalign{\medskip}-{\frac {1+{{\psi_{11}}}^{2}}{{\psi_{12}}}}&-{
\psi_{11}}&0&0&0&0\\ \noalign{\medskip}0&0&0&0&{\psi_{35}}&0
\\ \noalign{\medskip}0&0&0&-{\psi_{12}}\,t+{\psi_{11}}&-{\psi_{12}}&-{
\psi_{12}}\\ \noalign{\medskip}0&0&-{{\psi_{35}}}^{-1}&0&0&0
\\
\noalign{\medskip}0&0&{{\psi_{35}}}^{-1}&J_{64}&-{\psi_{11}}+{\psi_{12}}\,t&-{\psi_{11}}+{\psi_{12}}\,t
\end {array} \right],
$$
where $J_{64}= {\frac {-2\,{\psi_{12}}\,{
\psi_{11}}\,t+{{\psi_{12}}}^{2}{t}^{2}+1+{{\psi_{11}}}^{2}}{{\psi_{12}
}}}$,
$$
g=  -\left[ \begin {array}{cccccc} 0&0&0&{\frac {{\psi_{12}}\,{\psi_{11}}
\,t-1-{{\psi_{11}}}^{2}}{{\psi_{12}}}}&{\psi_{11}}&{\psi_{11}}\\  \noalign{\medskip}0&0&0&{\psi_{12}}\,t-{\psi_{11}}&{\psi_{12}}&{\psi_{12}}\\ \noalign{\medskip}0&0&{{\psi_{35}}}^{-1}&0&0&0\\
\noalign{\medskip}{\frac{{\psi_{12}}\,{\psi_{11}}\,t -1-{{\psi_{11}}}^{2}}{{\psi_{12}}}}&{\psi_{12}}\,t-{\psi_{11}}&0&0&0&0 \\ \noalign{\medskip}{\psi_{11}}&{\psi_{12}}&0&0&{\psi_{35}}&0\\ \noalign{\medskip}{\psi_{11}}&{\psi_{12}}&0&0&0&0\end {array}
 \right].
$$

\textbf{Second case.} The symplectic structure is:
$$
\omega_2 = e^1\wedge e^6 -2\,e^1\wedge e^5 -2\,e^2\wedge e^4 +e^2\wedge e^6 +e^3\wedge e^4 +e^3\wedge e^5.
$$

We require that the ideal $C^1\g$ is invariant for $J$. The associated metric will be pseudo-Riemannian. Free parameters among $\psi_{6i}$ are not present.
The condition $J^2=-1$ gives a complicated expression for the almost complex structure that depends on eight parameters. Nevertheless, the curvature tensor $R_{ijk}^s$ is easily calculated, but has many non-zero and complicated components. The scalar square $g(R,R)=0$ of the curvature tensor $R$ is equal to zero. The scalar curvature $S$ is equal to zero.

The Ricci tensor has the non-zero left upper ($2\times 2$)-block and depends on tree parameters $\psi_{12}$, $\psi_{34}$ and $\psi_{11}$.
The Ricci tensor remains too complicated. Let us assume in addition that $\psi_{11}=0$.
Under the condition that $\psi_{34}=\frac{1+\psi_{12}^2}{4\psi_{12}}$
the Ricci tensor is $J$-Hermitian, $Ric(JX,JY)=Ric(X,Y)$. Then we obtain:
\[
Ric_{2\times 2} = -\frac {1}{32}\left[ \begin {array}{cc}
{\frac {1+3\,{{\psi_{12}}}^{2}+{
{\psi_{12}}}^{6}+3\,{{\psi_{12}}}^{4}}{{{\psi_{12}}}^{4}}}\\
\noalign{\medskip}0&{\frac {1+3\,{{\psi_{12}}}^{2}+{{\psi_{12}}}^{6}+ 3\,{{\psi_{12}}}^{4}}{{{\psi_{12}}}^{2}}}
\end {array} \right].
\]
Let us assume that all remaining parameters on which the Ricci tensor does not depend are equal to zero. Then we obtain the following expressions for the canonical almost complex structure and associated almost pseudo-K\"{a}hler metric with a Hermitian Ricci tensor:
$$
J= \left[ \begin {array}{cccccc} 0&{\psi_{12}}&0&0&0&0\\
 \noalign{\medskip}-{{\psi_{12}}}^{-1}&0&0&0&0&0\\
 \noalign{\medskip}-2\,{{\psi_{12}}}^{-1}&0&0&{\frac {{{\psi_{12}}}^{2}+1}{4\,{\psi_{12}}}}&{\frac {{{\psi_{12}}}^{2}+1}{4\,{\psi_{12}}}}&0\\
 \noalign{\medskip}0&0&-{\frac {4\,{\psi_{12}}}{{{\psi_{12}}}^{2}+1}}&0&{\frac {1-{{\psi_{12}}}^{2}}{2\,{\psi_{12}}}}&{\frac {{{\psi_{12}}}^{2}+1}{4\,{\psi_{12}}}}\\
 \noalign{\medskip}0&{\frac {8\,{\psi_{12}}}{{{\psi_{12}}}^{2} +1}}&0&0& {\frac {-1+{{\psi_{12}}}^{2}}{2\,{\psi_{12}}}}&-{\frac {{{\psi_{12}}}^{2}+1}{4\,{
\psi_{12}}}}\\
 \noalign{\medskip}-\frac {32\,{\psi_{12}}}{(1+{\psi_{12}}^2)^2}& \frac {16\, \left( -1+{{\psi_{12}}
}^{2} \right) {\psi_{12}}}{(1+{\psi_{12}}^2)^2}&0
&0&{\frac {{{\psi_{12}}}^{2}+1}{{\psi_{12}}}}&{\frac {1-{{\psi_{12}}}^{2}}{2\,{\psi_{12}}}}\end {array} \right],
$$
$$
g= \left[ \begin {array}{cccccc}
-\frac {32\,{\psi_{12}}}{(1+{\psi_{12}}^2)^2}&-\frac {32\,{\psi_{12}}}{(1+{\psi_{12}}^2)^2}&0&0&2\,{{\psi_{12}}}^{-1}&{{\psi_{12}}}^{-1}\\
\noalign{\medskip}-\frac {32\,{\psi_{12}}}{(1+{\psi_{12}}^2)^2}& \frac {16\, \left( {{\psi_{12}}}^{2}-1 \right) {\psi_{12}}}{(1+{\psi_{12}}^2)^2}& {
\frac {8\,{\psi_{12}}}{1+{{\psi_{12}}}^{2}}}&0&2\,{\psi_{12}}&-{\psi_{12}
}\\
\noalign{\medskip}0&{\frac {8\,{\psi_{12}}}{1+{{\psi_{12}}}^{2}}}&
-{\frac {4\,{\psi_{12}}}{1+{{\psi_{12}}}^{2}}}&0&0&0 \\
 \noalign{\medskip}0&0&0&-{\frac{1+{{\psi_{12}}}^{2}}{4\,{\psi_{12}}}}& -{\frac {1+{{\psi_{12}}}^{2}}{4\,{\psi_{12}}}}&0\\
  \noalign{\medskip}2\,{{\psi_{12}}}^{-1}&2\,{\psi_{12}}&0&-{
\frac {1+{{\psi_{12}}}^{2}}{4\,{\psi_{12}}}}&-{\frac {1+{{\psi_{12}}
}^{2}}{4\,{\psi_{12}}}}&0\\ \noalign{\medskip}{{\psi_{12}}}^{-1}&-{\psi_{12}}&0&0&0&0\end {array} \right].
$$

\textbf{Third case.} The symplectic structure is:
$$
\omega_3 = e^1\wedge e^4 -e^1\wedge e^5 +e^1\wedge e^6 -e^2\wedge e^4 +e^2\wedge e^5 +e^2\wedge e^6 +e^3\wedge e^4 +e^3\wedge e^5.
$$

We require that the ideal $C^1\g$ is invariant for $J$. The associated metric will be pseudo-Riemannian. Free parameters among $\psi_{6i}$ are not present.
The condition $J^2=-1$ gives a complicated expression for the almost complex structure that depends on eight parameters. Nevertheless, the curvature tensor $R_{ijk}^s$ is easily calculated, but has many non-zero and complicated components. The scalar square $g(R,R)=0$ of the curvature tensor $R$ is equal to zero. The scalar curvature $S$ is equal to zero.

The Ricci tensor has the non-zero left upper ($2\times 2$)-block and depends on three parameters $\psi_{11}$, $\psi_{12}\ne 0$ and $\psi_{34}$. It is $J$-Hermitian at $\psi_{34}=\pm\frac{(\psi_{11}-\psi_{12})^2+1}{4\psi_{12}}$. We take for clarity the positive value $\psi_{34}=\frac{(\psi_{11}-\psi_{12})^2+1}{4\psi_{12}}$. The Ricci tensor remains too complicated. Let us assume in addition that $\psi_{11}=0$. Then:
\[
Ric_{2\times 2} =\left[ \begin {array}{cc}
-\,\frac {(1+{\psi_{12}}^2)^3}{32\,{\psi_{12}}^{4}}\\
\noalign{\medskip}0&-\,\frac {(1+{\psi_{12}}^2)^3}{32\,{\psi_{12}}^{2}}
\end {array} \right],
\]

Let us assume that all remaining parameters on which the Ricci tensor does not depend are equal to zero. Then we obtain the following expressions for the canonical almost complex structure and associated almost pseudo-K\"{a}hler metric with a Hermitian Ricci tensor:
$$
J=\left[ \begin {array}{cccccc} 0&\psi_{12}&0&0&0&0\\
 \noalign{\medskip}-\frac{1}{\psi_{12}}&0&0&0&0&0\\
 \noalign{\medskip}0&-{\frac {{\psi_{12}}^{2}+1}{\psi_{12}}}&
 \frac{1}{\psi_{12}}&{\frac {{\psi_{12}}^{2}+1}{4\,\psi_{12}}}&
{\frac {{\psi_{12}}^{2}+1}{4\,\psi_{12}}}&0\\
 \noalign{\medskip}0&0&-\frac{4}{\psi_{12}}& -\frac{1}{\psi_{12}}&
 -1{\frac {{\psi_{12}}^{2}+1}{2\,\psi_{12}}}&{\frac {{\psi_{12}}^{2}+1}{4\,\psi_{12}}}\\
 \noalign{\medskip}-\frac{4}{\psi_{12}}&\frac{4}{\psi_{12}}&0&0&
{\frac {{\psi_{12}}^{2}-1}{2\,\psi_{12}}}&-{\frac {{\psi_{12}}^{2}+1}{4\,\psi_{12}}}\\
 \noalign{\medskip}-\frac{8}{\psi_{12}}&-\frac{8}{\psi_{12}}&0&0&{\frac {{\psi_{12}}^{2}+1}{\psi_{12}}
}&-{\frac {{\psi_{12}}^{2}-1}{2\,\psi_{12}}}\end {array}
 \right],
$$
$$
g=-\frac{1}{\psi_{12}}\left[ \begin {array}{cccccc} 4&12&4&1&-1&-1\\ \noalign{\medskip}12&4&-4&-1&-1-2\,{{\psi_{12}}}^{2}&{{\psi_{12}}}^{2}\\ \noalign{\medskip}4&-4&4&1&1&0\\
 \noalign{\medskip}1&-1&1&({{\psi_{12}}}^{2}+1)/4&({{\psi_{12}}}^{2}+1)/4&0\\
 \noalign{\medskip}-1&-1-2\,{{\psi_{12}}}^{2}& 1& ({{\psi_{12}}}^{2}+1)/4 & ({{\psi_{12}}}^{2}+1)/4&0\\
 \noalign{\medskip}-1&{{\psi_{12}}}^{2}&0&0&0&0\end {array} \right].
$$

\textbf{Fourth case.} The symplectic structure is:
$$
\omega_4 = 2\,e^1\wedge e^4 +e^1\wedge e^6 +2\,e^2\wedge e^5 +e^2\wedge e^6 +e^3\wedge e^4 +e^3\wedge e^5.
$$

We require that the ideal $C^1\g$ is invariant for $J$. The associated metric will be pseudo-Riemannian. Free parameters among $\psi_{6i}$ are not present.
The condition $J^2=-1$ gives a complicated expression for the almost complex structure that depends on eight parameters. Nevertheless, the curvature tensor $R_{ijk}^s$ is easily calculated, but has many non-zero and complicated components. The scalar square $g(R,R)=0$ of the curvature tensor $R$ is equal to zero. The scalar curvature $S$ is equal to zero.

The Ricci tensor has the non-zero left upper ($2\times 2$)-block and depends on three parameters $\psi_{11}$, $\psi_{12}\ne 0$ and $\psi_{34}$. It is $J$-Hermitian at $\psi_{34}=\pm\frac{(\psi_{11}-\psi_{12})^2+1}{4\psi_{12}}$. We take for clarity the positive value $\psi_{34}=\frac{(\psi_{11}-\psi_{12})^2+1}{4\psi_{12}}$. The Ricci tensor remains too complicated. Let us assume in addition that $\psi_{11}=0$. Then:
\[
Ric_{2\times 2} =\left[ \begin {array}{cc} -\,\frac {(1+{\psi_{12}}^2)^3}{32\,{\psi_{12}}^{4}}\\
\noalign{\medskip}0&-\,\frac {(1+{\psi_{12}}^2)^3}{32\,{\psi_{12}}^{2}}
\end {array} \right],
\]
Let us assume that all remaining parameters on which the Ricci tensor does not depend are equal to zero. Then we obtain the following expressions for the canonical almost complex structure and associated almost pseudo-K\"{a}hler metric with a Hermitian Ricci tensor:
$$
J=\left[ \begin {array}{cccccc} 0&{\psi_{12}}&0&0&0&0\\ -\frac{1}{\psi_{12}}&0&0&0&0&0\\
2\,{\psi_{12}}&0&0& {\frac {{{\psi_{12}}}^{2} +1}{4\,{\psi_{12}}}}&{\frac {{{\psi_{12}}}^{2}+1}{4\,{\psi_{12}}}}&0 \\
0&0&{\frac {2\,{\psi_{12}}\, \left( {{\psi_{12}}}
^{2}-1 \right) }{{{\psi_{12}}}^{2}+1}}&0&-{\frac {{{\psi_{12}}}^{
2}-1}{2\,{\psi_{12}}}}&{\frac {{{\psi_{12}}}^{2}+1}{4\,{\psi_{12}}}}\\
0&-{\frac {8\,{{\psi_{12}}}^{3}}{{{\psi_{12}}}^{2}+1}}& -2\,{\psi_{12}}& 0& {\frac {{{\psi_{12}}}^{2}-1}{2\,{\psi_{12}}}
}&-{\frac {{{\psi_{12}}}^{2}+1}{4\,{\psi_{12}}}} \\
J_{61}&J_{62}&-{\frac {4\,{\psi_{12}}\, \left( {
{\psi_{12}}}^{2}-1 \right) }{{{\psi_{12}}}^{2}+1}}&-2\,{\psi_{12}}&
{\frac {1-{{\psi_{12}}}^{2}}{{\psi_{12}}}}&{\frac {1-{{\psi_{12}}}^
{2}}{2\,{\psi_{12}}}}\end {array} \right],
$$
where $J_{61}=J_{62}=-\frac {16\,{{\psi_{12}}}^{3} \left( {{\psi_{12}
}}^{2}-1 \right) }{({\psi_{12}}^2+1)^2}$,
$$
g=\left[ \begin {array}{cccccc} -\frac {16\,{{\psi_{12}}}^{3} \left( {{\psi_{12}}}^{2}-1 \right) }{({\psi_{12}}^2+1)^2}
& -\frac {16\,{{\psi_{12}}}^{3} \left( {{\psi_{12}}}^{2}-1 \right) }{({\psi_{12}}^2+1)^2}&0&-2\,{\psi_{12}}&-{\frac {2\,{{\psi_{12}}}^{2}-1}{{\psi_{12}}}}&\frac{1}{\psi_{12}}\\
 \noalign{\medskip}-\frac {16\,{{\psi_{12}}}^{3} \left( {{\psi_{12}
}}^{2}-1 \right) }{({\psi_{12}}^2+1)^2}& -{
\frac {32\,{{\psi_{12}}}^{5}}{ \left( {{\psi_{12}}}^{2}+1 \right) ^{2}}}&
-{\frac {8\,{{\psi_{12}}}^{3}}{{{\psi_{12}}}^{2}+1}}&-2\,{\psi_{12}}&0&
-{\psi_{12}}\\
 \noalign{\medskip}0&-{\frac {8\,{{\psi_{12}}}^{3}}{{{
\psi_{12}}}^{2}+1}}&-{\frac {4\,{\psi_{12}}}{{{\psi_{12}}}^{2}+1}}&0&0
&0\\
 \noalign{\medskip}-2\,{\psi_{12}}&-2\,{\psi_{12}}&0&-{\frac
{{{\psi_{12}}}^{2}+1}{4\,{\psi_{12}}}}&-{\frac {{{\psi_{12}}}^{2}+1}
{4\,{\psi_{12}}}}&0\\
 \noalign{\medskip}-2\,{\frac {{{\psi_{12}}}^{2}-1}{
{\psi_{12}}}}&0&0&-{\frac {{{\psi_{12}}}^{2}+1}{4\,{\psi_{12}}}}&-{\frac {{{\psi_{12}}}^{2}+1}{4\,{\psi_{12}}}}&0\\
 \noalign{\medskip}\frac{1}{\psi_{12}}&-{\psi_{12}}&0&0&0&0\end {array} \right].
$$

\subsection{The Lie group $G_{6}$}
Let us consider a six-dimensional symplectic Lie group $G_6$ which has a Lie algebra $\g_6$  with non-trivial Lie brackets \cite{Goze-Khakim-Med}:

$[e_1,e_2] = e_3$, $[e_1,e_3] = e_4$,  $[e_1,e_4] = e_5$, $[e_2,e_3] = e_6$.\\
The sequence of ideals is:
$C^1\g=\R\{e_3,e_4,e_5,e_6\}$,
$C^2\g=\R\{e_4,e_5,e_6\}$,
$C^3\g=\R\{e_5\}$,
$\mathcal{Z}=\R\{e_5,e_6\}$, and the last is the Lie algebra center.
There is an increasing series of ideals: $\g_1=\mathcal{Z}=\R\{e_5,e_6\}$,\, $\g_2=\R\{e_4,e_5,e_6\}$,\,
$\g_3=\R\{e_3,e_4,e_5,e_6\}$,\,
$\g_4=\g$. The Lie algebra type is (2,3,4,6).

The symplectic structure is defined by the 2-form \cite{Goze-Khakim-Med}:
$$
\omega = e^1\wedge e^6 +e^2\wedge e^4 +e^2\wedge e^5 -e^3\wedge e^4,
$$

We require of the compatible almost complex structure that the ideal $C^1\g$ is $J$-invariant. Then the center $\mathcal{Z}$ will also be $J$-invariant. The almost complex structure $J$ is almost nilpotent. In this case the associated metric will be pseudo-Riemannian.
As $\mathcal{Z}=\R\{e_5,e_6\}$, then according to Lemma \ref{ParRiem} and Corollary \ref{ParCurvTensor}, it is possible to assume that some of the free parameters ($\psi_{ij}$, $i=5,6,\, j=1,\dots, 6$) are equal to zero (the curvature does not depend on them). Such free parameters will be $\psi_{51}$, $\psi_{52}$ and $\psi_{61}$. We assume that they are zero.

The condition $J^2=-1$ gives a complicated expression for the almost complex structure that depends on eight parameters. Nevertheless, the curvature tensor $R_{ijk}^s$ is easily calculated, but has many non-zero and complicated components. The scalar square $g(R,R)=0$ of the curvature tensor $R$ is equal to zero. The scalar curvature $S$ is equal to zero.

The Ricci tensor has a block appearance with one non-zero left upper block of an order 1:
$$
Ric=-\frac {(1+{{\psi_{33}}}^{2})^2}{2\,{{\psi_{43}}}^{2}}\, (e^1)^2.
$$
It depends on two parameters $\psi_{33}$ and $\psi_{43}\ne 0$.
The Ricci tensor is not $J$-Hermitian for any values of the parameters.

Let us set all the remaining parameters on which the tensor of Ricci does not depend to be zero. Then we obtain the following expressions for the canonical almost complex structure and associated almost pseudo-K\"{a}hler metric:
$$
J=\left[ \begin {array}{cccccc} 0&1&0&0&0&0\\ \noalign{\medskip}-1&0&0&0&0&0\\ \noalign{\medskip}0&0&{\psi_{33}}&-{\frac {1+{{\psi_{33}}}^{2}}
{{\psi_{43}}}}&0&0\\ \noalign{\medskip}0&0&{\psi_{43}}&-{\psi_{33}}&0&0
\\ \noalign{\medskip}0&0&-{\psi_{43}}&{\psi_{33}}&0&-1
\\ \noalign{\medskip}0&0&0&1&1&0\end {array} \right],
$$
$$
g=\left[ \begin {array}{cccccc} 0&0&0&1&1&0\\ \noalign{\medskip}0&0&0&0&0&-1\\ \noalign{\medskip}0&0&-{\psi_{43}}&{\psi_{33}}&0&0
\\ \noalign{\medskip}1&0&{\psi_{33}}&-{\frac {1+{{\psi_{33}}}^{2}}{{
\psi_{43}}}}&0&0\\ \noalign{\medskip}1&0&0&0&0&0\\ \noalign{\medskip}0
&-1&0&0&0&0\end {array} \right].
$$


\subsection{The Lie group $G_{7}$}
Let us consider a six-dimensional symplectic Lie group $G_7$ which has a Lie algebra $\g_7$  with non-trivial Lie brackets \cite{Goze-Khakim-Med}:

$[e_1,e_2] = e_4$, $[e_1,e_4] = e_5$,  $[e_1,e_5] = e_6$, $[e_2,e_3] = e_6$, $[e_2,e_4] = e_6$.\\
This Lie algebra is semidirect product of a five-dimensional Lie algebra $\mathfrak{h}= \R\{e_1,e_2,e_4,e_5,e_6\}$ and $\mathbb{R}^1=\mathbb{R}\{e_3\}$
The sequence of ideals is:
$C^1\g=\R\{e_4,e_5,e_6\}$,
$C^2\g=\R\{e_5,e_6\}$,
$C^3\g=\R\{e_6\}=\mathcal{Z}$, and the last is the Lie algebra center.
There is an increasing series of ideals: $\g_1=\mathcal{Z}=\R\{e_6\}$,\, $\g_2=\R\{e_3,e_5,e_6\}$,\,
$\g_3=\R\{e_3,e_4,e_5,e_6\}$,\,
$\g_4=\g$. The Lie algebra type is (1,3,4,6).

The Lie group $G_7$ has 2 left-invariant symplectic structures  \cite{Goze-Khakim-Med}.

\textbf{First case.} The symplectic structure is:
$$
\omega_1 = e^1\wedge e^3 +e^2\wedge e^6 -e^4\wedge e^5.
$$

We require that the almost complex structure $J$ is $\omega_1$-compatible and that the ideal $\g_3$ is $J$-invariant. In this case the ideal $I=\R\{e_3,e_6\}$ is also $J$-invariant. The almost complex structure $J$ is almost nilpotent. The associated metric will be pseudo-Riemannian. We suppose also that $\psi_{61}=0$.

The condition $J^2=-1$ gives a complicated expression for the almost complex structure that depends on seven parameters (under the condition that $\psi_{61} =0$). Nevertheless, the curvature tensor is easily calculated, but has many non-zero and complicated components. The scalar square of the curvature tensor $R$ is equal to zero, $g(R,R)=0$. The scalar curvature $S$ is equal to zero.

The Ricci tensor has a block appearance with one non-zero left upper block of an order 2:
\[
Ric_{2\times 2}=-\frac 12 \left[ \begin {array}{cc}
{\frac {\left( 1+{\psi_{11}}^{2}\right)^{4}+{\psi_{45}}^{2}{\psi_{12}}^{4}}{{\psi_{12}}^{4}}}&{\frac { \left( 1+{\psi_{11}}^{2} \right) ^{3}\psi_{11}}{{\psi_{12}}^{3}}}\\
\noalign{\medskip}{\frac { \left( 1+{\psi_{11}}^{2}
 \right)^{3}\psi_{11}}{{\psi_{12}}^{3}}}&{\frac {\left(1+
{\psi_{11}}^{2} \right)^{2}{\psi_{11}}^{2}}{{\psi_{12}}^{2}}}
\end {array} \right]
\]
The Ricci tensor is not $J$-Hermitian for any values of the parameters.

Let us assume that all remaining parameters on which the Ricci tensor does not depend are equal to zero. Then we obtain the following expressions for the canonical almost complex structure and associated almost pseudo-K\"{a}hler metric:
\[
J(e_2)=\psi_{12}\, e_1 -\psi_{11}\, e_2,\qquad
J(e_3)=-\psi_{11}\, e_3 -\psi_{12}\, e_6,\qquad
J(e_5)=\psi_{45}\, e_4.
\]
\[
g=\left[ \begin {array}{cccccc}
0&0&-\psi_{11}&0&0&{\frac {1+{\psi_{11}}^{2}}{\psi_{12}}}\\
\noalign{\medskip}0&0&-\psi_{12}&0&0&\psi_{11}\\
\noalign{\medskip}-\psi_{11}&-\psi_{12}&0&0&0&0\\
\noalign{\medskip}0&0&0&{\psi_{45}}^{-1}&0&0\\
\noalign{\medskip}0&0&0&0&\psi_{45}&0\\
\noalign{\medskip}{\frac {1+{\psi_{11}}^{2}}{\psi_{12}}}&\psi_{11}&0&0&0&0
\end {array} \right]
\]

\textbf{Second case.} The symplectic structure is:
$$
\omega_2 = e^1\wedge e^6 +e^2\wedge e^5 -e^3\wedge e^4.
$$

We require that the almost complex structure $J$ is $\omega_2$-compatible and that the ideal $\g_3$ is $J$-invariant. In this case the ideal $C^2\g$ is also $J$-invariant. The almost complex structure $J$ is almost nilpotent. The associated metric will be pseudo-Riemannian. We suppose also that $\psi_{61}=0$.

The condition $J^2=-1$ gives a complicated expression for the almost complex structure that depends on seven parameters (under the condition that $\psi_{61} =0$). Nevertheless, the curvature tensor is easily calculated, but has many non-zero and complicated components. The scalar square of the curvature tensor $R$ is equal to zero, $g(R,R)=0$. The scalar curvature $S$ is equal to zero.

The Ricci tensor has a block appearance with one non-zero left upper block of an order 2:
\[
Ric_{2\times 2} =-\frac 12\, \left[ \begin {array}{cc}
{{\psi_{11}}}^{2}{{\psi_{12}}}^{2}& {\psi_{11}}\,{{\psi_{12}}}^{3}\\
\noalign{\medskip}{\psi_{11}}\,{{\psi_{12}}}^{3}& {{\psi_{12}}}^{4}
\end {array} \right].
\]
It depends on two parameters $\psi_{11}$ and $\psi_{12}\ne 0$ and is not $J$-Hermitian.

Let us assume that all remaining parameters on which the Ricci tensor does not depend are equal to zero. As $\psi_{34}\ne 0$, we consider $\psi_{34}=1$.
Then we obtain the following expressions for the canonical almost complex structure and associated almost pseudo-K\"{a}hler metric (at $\psi_{34}=1$):
\[
J(e_2)= \psi_{12}\, e_1 -\psi_{11}\, e_2,\qquad
J(e_4)=  e_3,\qquad
J(e_6)= -\psi_{12}\, e_5 -\psi_{11}\, e_6.
\]
$$
g= \left[ \begin {array}{cccccc} 0&0&0&0&{\frac {1+{{\psi_{11}}}^{2}}{{
\psi_{12}}}}&-{\psi_{11}}\\ \noalign{\medskip}0&0&0&0&{\psi_{11}}&-{
\psi_{12}}\\ \noalign{\medskip}0&0&1&0&0&0\\ \noalign{\medskip}0&0&0&1
&0&0\\ \noalign{\medskip}{\frac {1+{{\psi_{11}}}^{2}}{{\psi_{12}}}}&{
\psi_{11}}&0&0&0&0\\ \noalign{\medskip}-{\psi_{11}}&-{\psi_{12}}&0&0&0
&0\end {array} \right].
$$

\subsection{The Lie group $G_{8}$}
Let us consider a six-dimensional symplectic Lie group $G_8$ which has a Lie algebra $\g_8$  with non-trivial Lie brackets \cite{Goze-Khakim-Med}:

$[e_1,e_3] = e_4$, $[e_1,e_4] = e_5$,  $[e_1,e_5] = e_6$, $[e_2,e_3] = e_5$ , $[e_2,e_4] = e_6$.\\
This Lie algebra is semidirect product of a five-dimensional Lie algebra $\mathfrak{h}= \R\{e_1,e_3,e_4,e_5,e_6\}$ and $\mathbb{R}^1=\mathbb{R}\{e_2\}$.
The sequence of ideals is:
$C^1\g=\R\{e_4,e_5,e_6\}$,
$C^2\g=\R\{e_5,e_6\}$,
$C^3\g=\R\{e_6\}=\mathcal{Z}$, and the last is the Lie algebra center.
There is an increasing series of ideals: $\g_1=\mathcal{Z}=\R\{e_6\}$,\, $\g_2=\R\{e_2,e_5,e_6\}$,\,
$\g_3=\R\{e_2,e_4,e_5,e_6\}$,\,
$\g_4=\g$. The Lie algebra type is (1,3,4,6).

The symplectic structure is defined by the 2-form \cite{Goze-Khakim-Med}:
$$
\omega = e^1\wedge e^6 +e^2\wedge e^5 -e^3\wedge e^4.
$$

We require that the almost complex structure $J$ is $\omega$-compatible and that the ideal $\g_3$ is $J$-invariant. In this case the ideal $C^2\g$ is also $J$-invariant. The almost complex structure $J$ is almost nilpotent. The associated metric will be pseudo-Riemannian. We suppose also that $\psi_{61}=0$.

The condition $J^2=-1$ gives a complicated expression for the almost complex structure that depends on eight parameters. Nevertheless, the curvature tensor is easily calculated, but has many non-zero and complicated components. The scalar square of the curvature tensor $R$ is equal to zero, $g(R,R)=0$. The scalar curvature $S$ is equal to zero.

Then Ricci tensor has a block appearance with one non-zero left upper block of an order 2 and depends on three parameters $\psi_{11}$, $\psi_{12}\ne 0$ and $\psi_{34}\ne 0$.
It is $J$-Hermitian at $\psi_{34}=\pm\psi_{12}$. Then:
\[
Ric_{2\times 2} =-\frac 12\left[ \begin {array}{cc} {{\psi_{12}}}^{2}(1+{{\psi_{11}}}^{2})& {\psi_{11}}\,{{\psi_{12}}}^{3} \\ \noalign{\medskip}{\psi_{11}}\,{{\psi_{12}}}^{3}& {{\psi_{12}}}^{4}
\end {array} \right].
\]

Let us assume that all remaining parameters on which the Ricci tensor does not depend are equal to zero. Then we obtain the following expressions for the canonical almost complex structure $J$ and associated almost pseudo-K\"{a}hler metric $g$ with a Hermitian Ricci tensor (at $\psi_{34}=\psi_{12}$):
\[
J(e_2)= \psi_{12}\, e_1 -\psi_{11}\, e_2,\qquad
J(e_4)= \psi_{12}\, e_3,\qquad
J(e_6)= -\psi_{12}\, e_5 -\psi_{11}\, e_6.
\]
$$
g=  \left[ \begin {array}{cccccc} 0&0&0&0&{\frac {1+{{\psi_{11}}}^{2}}{{
\psi_{12}}}}&-{\psi_{11}}\\ \noalign{\medskip}0&0&0&0&{\psi_{11}}&-{
\psi_{12}}\\ \noalign{\medskip}0&0&{{\psi_{12}}}^{-1}&0&0&0
\\ \noalign{\medskip}0&0&0&{\psi_{12}}&0&0\\ \noalign{\medskip}{\frac
{1+{{\psi_{11}}}^{2}}{{\psi_{12}}}}&{\psi_{11}}&0&0&0&0
\\ \noalign{\medskip}-{\psi_{11}}&-{\psi_{12}}&0&0&0&0\end {array}
 \right].
$$

\subsection{The Lie group $G_{9}$}
Let us consider a six-dimensional symplectic Lie group $G_9$ which has a Lie algebra $\g_9$  with non-trivial Lie brackets \cite{Goze-Khakim-Med}:

$[e_1,e_2] = e_4$, $[e_1,e_4] = e_5$, $[e_1,e_5] = e_6$, $[e_2,e_3] = e_6$.\\
This Lie algebra is semidirect product of a five-dimensional Lie algebra $\mathfrak{h}= \R\{e_1,e_2,e_4,e_5,e_6\}$ and $\mathbb{R}^1=\mathbb{R}\{e_3\}$.
The sequence of ideals is:
$C^1\g=\R\{e_4,e_5,e_6\}$,
$C^2\g=R\{e_5,e_6\}$
$\mathcal{Z}= \R\{e_6\}$, and the last is the Lie algebra center.
There is an increasing series of ideals: $\g_1=\mathcal{Z}=\R\{e_6\}$,\, $\g_2=\R\{e_3,e_5,e_6\}$,\,
$\g_3=\R\{e_3,e_4,e_5,e_6\}$,\,
$\g_4=\g$. The Lie algebra type is (1,3,4,6).

The symplectic structure is defined by the 2-form \cite{Goze-Khakim-Med}:
$$
\omega = e^1\wedge e^3 +e^2\wedge e^6 -e^4\wedge e^5.
$$

We require that the almost complex structure $J$ is $\omega$-compatible and that the ideal $\g_3$ is $J$-invariant. In this case the ideal $I=\R\{e_3,e_6\}$ is also $J$-invariant. The almost complex structure $J$ is almost nilpotent. The associated metric will be pseudo-Riemannian. We suppose also that $\psi_{61}=0$.

The condition $J^2=-1$ gives a complicated expression for the almost complex structure that depends on seven parameters (at $\psi_{61}=0$). Nevertheless, the curvature tensor is easily calculated, but has many non-zero and complicated components. The scalar square of the curvature tensor $R$ is equal to zero, $g(R,R)=0$. The scalar curvature $S$ is equal to zero.

Then Ricci tensor has a block appearance with one non-zero left upper block of an order 2:
\[
Ric_{2\times 2}=-\frac 12 \left[ \begin {array}{cc}
{\frac {\left( 1+{\psi_{11}}^{2}\right)^{4}+{\psi_{45}}^{2}{\psi_{12}}^{4}}{{\psi_{12}}^{4}}}&{\frac { \left( 1+{\psi_{11}}^{2} \right) ^{3}\psi_{11}}{{\psi_{12}}^{3}}}\\
\noalign{\medskip}{\frac { \left( 1+{\psi_{11}}^{2}
 \right)^{3}\psi_{11}}{{\psi_{12}}^{3}}}&{\frac {\left(1+
{\psi_{11}}^{2} \right)^{2}{\psi_{11}}^{2}}{{\psi_{12}}^{2}}}
\end {array} \right].
\]
The Ricci tensor is not $J$-Hermitian for any values of the parameters.

Let us assume that all remaining parameters on which the Ricci tensor does not depend are equal to zero. Then we obtain the following expressions for the canonical almost complex structure and associated almost pseudo-K\"{a}hler metric:
\[
J(e_2)=\psi_{12}\, e_1 -\psi_{11}\, e_2,\qquad
J(e_3)=-\psi_{11}\, e_3 -\psi_{12}\, e_6,\qquad
J(e_5)=\psi_{45}\, e_4.
\]
\[
g=\left[ \begin {array}{cccccc}
0&0&-\psi_{11}&0&0&{\frac {1+{\psi_{11}}^{2}}{\psi_{12}}}\\
\noalign{\medskip}0&0&-\psi_{12}&0&0&\psi_{11}\\
\noalign{\medskip}-\psi_{11}&-\psi_{12}&0&0&0&0\\
\noalign{\medskip}0&0&0&{\psi_{45}}^{-1}&0&0\\
\noalign{\medskip}0&0&0&0&\psi_{45}&0\\
\noalign{\medskip}{\frac {1+{\psi_{11}}^{2}}{\psi_{12}}}&\psi_{11}&0&0&0&0
\end {array} \right]
\]

\subsection{The Lie group $G_{19}$}
Let us consider a six-dimensional symplectic Lie group $G_{19}$ which has a Lie algebra $\g_{19}$  with non-trivial Lie brackets \cite{Goze-Khakim-Med}:

$[e_1,e_2] = e_4$, $[e_1,e_4] = e_5$, $[e_1,e_5] = e_6$.\\
This Lie algebra is direct product of a five-dimensional Lie algebra $\mathfrak{h}= \R\{e_1,e_2,e_4,e_5,e_6\}$ and $\mathbb{R}^1=\mathbb{R}\{e_3\}$.
The sequence of ideals is:
$D^1\g=C^1\g=\R\{e_4,e_5,e_6\}$, $D^2\g=0$,
$C^2\g=\R\{e_5,e_6\}$, $C^3\g=\R\{e_6\}$,
$\mathcal{Z}= \R\{e_3,e_6\}$, and the last is the Lie algebra center.
There is an increasing series of ideals: $\g_1=\mathcal{Z}=\R\{e_3,e_6\}$,\, $\g_2=\R\{e_3,e_5,e_6\}$,\,
$\g_3=\R\{e_3,e_4,e_5,e_6\}$,\,
$\g_4=\g$. The Lie algebra type is (2,3,4,6).

The symplectic structure is defined by the 2-form \cite{Goze-Khakim-Med}:
$$
\omega = e^1\wedge e^3 +e^2\wedge e^6 -e^4\wedge e^5.
$$

We require that the almost complex structure $J$ is $\omega$-compatible and that the ideal $\g_3$ is $J$-invariant. In this case the ideal $I=\R\{e_3,e_6\}$ is also $J$-invariant. The almost complex structure $J$ is almost nilpotent. The associated metric will be pseudo-Riemannian. We suppose also that $\psi_{62}=0$.

The condition $J^2=-1$ gives a complicated expression for the almost complex structure that depends on seven parameters (at $\psi_{61}=0$). Nevertheless, the curvature tensor is easily calculated, but has many non-zero and complicated components. The scalar square of the curvature tensor $R$ is equal to zero, $g(R,R)=0$. The scalar curvature $S$ is equal to zero.

The Ricci tensor has only one component:
\[
Ric= -\frac 12\, (\psi_{45})^2\, (e_1)^2.
\]
The Ricci tensor is not $J$-Hermitian for any values of the parameters.

Let us assume that all remaining parameters on which the Ricci tensor does not depend are equal to zero. As $\psi_{12}\ne 0$, we consider $\psi_{12}=1$. Then we obtain the following expressions for the canonical almost complex structure $J$ and associated almost pseudo-K\"{a}hler metric $g$:
\[
J(e_2)= e_1,\qquad
J(e_5)=\psi_{45}\, e_4,\qquad
J(e_6)= e_3.
\]
$$
g= \left[ \begin {array}{cccccc}
0&0&0&0&0&1\\
\noalign{\medskip}0&0&-1&0&0&0\\
\noalign{\medskip}0&-1&0&0&0&0\\
\noalign{\medskip}0&0&0&{\psi_{45}}^{-1}&0&0\\
\noalign{\medskip}0&0&0&0&\psi_{45}&0\\
\noalign{\medskip}1&0&0&0&0&0
\end {array} \right].
$$

\subsection{The Lie group $G_{20}$}
Let us consider a six-dimensional symplectic Lie group $G_{20}$ which has a Lie algebra $\g_{20}$  with non-trivial Lie brackets \cite{Goze-Khakim-Med}:

$[e_1,e_2] = e_3$, $[e_1,e_3] = e_4$, $[e_1,e_4] = e_5$, $[e_2,e_3] = e_5$.\\
This Lie algebra is direct product of a five-dimensional Lie algebra $\mathfrak{h}= \R\{e_1,e_2,e_3,e_4,e_5\}$ and $\mathbb{R}^1=\mathbb{R}\{e_6\}$

The sequence of ideals is:
$D^1\g=C^1\g=\R\{e_3,e_4,e_5\}$, $D^2\g=0$,
$C^2\g=\R\{e_4,e_5\}$,
$C^3\g=\R\{e_5\}$,
$\mathcal{Z}= \R\{e_5,e_6\}$, and the last is the Lie algebra center.
There is an increasing series of ideals: $\g_1=\mathcal{Z}=\R\{e_5,e_6\}$,\, $\g_2=\R\{e_4,e_5,e_6\}$,\,
$\g_3=\R\{e_3,e_4,e_5,e_6\}$,\,
$\g_4=\g$. The Lie algebra type is (2,3,4,6).

The symplectic structure is defined by the 2-form \cite{Goze-Khakim-Med}:
$$
\omega = e^1\wedge e^6 +e^2\wedge e^5  -e^3\wedge e^4.
$$

We require that the almost complex structure $J$ is $\omega$-compatible and that the center $\mathcal{Z}$ is $J$-invariant.
In this case the ideal $\g_3$ is also $J$-invariant. The almost complex structure $J$ is almost nilpotent. The associated metric will be pseudo-Riemannian.

As $\mathcal{Z}=\R\{e_5,e_6\}$, then according to Lemma \ref{ParRiem} and Corollary \ref{ParCurvTensor}, it is possible to assume that some of the free parameters ($\psi_{ij}$, $i=5,6,\, j=1,\dots, 6$) are equal to zero (the curvature does not depend on them). Such free parameters will be $\psi_{51}$, $\psi_{52}$ and $\psi_{61}$. We assume that they are zero.

The condition $J^2=-1$ gives a complicated expression for the almost complex structure that depends on seven parameters (at $\psi_{61}=0$). Nevertheless, the curvature tensor is easily calculated, but has many non-zero and complicated components. The scalar square of the curvature tensor $R$ is equal to zero, $g(R,R)=0$. The scalar curvature $S$ is equal to zero.

The Ricci tensor has only one component and is not $J$-Hermitian:
\[
Ric= -\frac 12{\psi_{34}}^{2}\, (e^1)^2.
\]

Let us assume that all remaining parameters on which the Ricci tensor does not depend are equal to zero. As $\psi_{12}\ne 0$, we consider $\psi_{12}=1$. Then we obtain the following expressions for the canonical almost complex structure $J$ and associated almost pseudo-K\"{a}hler metric $g$:
$$
J_0(e_2)=e_1,\quad J_0(e_4)=\psi_{34}e_3,\quad J_0(e_5)=e_6,
$$
$$
g_0=2\, e^1\cdot e^5 -2\, e^2\cdot e^6 +\frac {1}{\psi_{34}}(e^3)^2 + \psi_{34}(e^4)^2.
$$
%
%

\subsection{The Lie group $G_{22}$}
Let us consider a six-dimensional symplectic Lie group $G_{22}$ which has a Lie algebra $\g_{22}$  with non-trivial Lie brackets \cite{Goze-Khakim-Med}:

$[e_1,e_2] = e_5$, $[e_1,e_5] = e_6$. \\
This Lie algebra is direct product of a four-dimensional Lie algebra $\mathfrak{h}= \R\{e_1,e_2,e_5,e_6\}$ and $\mathbb{R}^2 = \R\{e_3,e_4\}$.

The sequence of ideals is:
$C^1\g=\R\{e_5,e_6\}$, $C^2\g=\R\{e_6\}$,
$\mathcal{Z}= \R\{e_3,e_4,e_6\}$, and the last is the Lie algebra center.
There is an increasing series of ideals: $\g_1=\mathcal{Z}=\R\{e_3,e_4,e_6\}$,\, $\g_2=\R\{e_3,e_4,e_5,e_6\}$,\,
$\g_3=\g$. The Lie algebra type is (3,4,6).

The symplectic structure is defined by the 2-form \cite{Goze-Khakim-Med}:
$$
\omega = e^1\wedge e^6 +e^2\wedge e^5  +e^3\wedge e^4.
$$

We require that the almost complex structure $J$ is $\omega$-compatible and that the ideal $C^1\g$ is $J$-invariant.
In this case the ideal $\g_2$ is also $J$-invariant. The almost complex structure $J$ is almost nilpotent. The associated metric will be pseudo-Riemannian.

The condition $J^2=-1$ gives a complicated expression for the almost complex structure that depends on seven parameters (at $\psi_{61}=0$). Nevertheless, the curvature tensor is easily calculated, but has many non-zero and complicated components. The scalar square of the curvature tensor $R$ is equal to zero, $g(R,R)=0$. The scalar curvature $S$ is equal to zero.

The Ricci tensor has a block appearance with one non-zero left upper block of an order 2:
\[
Ric_{2\times 2} =-\frac 12\left[ \begin {array}{cc}
{{\psi_{11}}}^{2}{{\psi_{12}}}^{2}& {\psi_{11}}\,{{\psi_{12}}}^{3}\\ \noalign{\medskip}{\psi_{11}}\,{{\psi_{12}}}^{3}& {{\psi_{12}}}^{4}
\end {array} \right].
\]
It depends on two parameters $\psi_{11}$ and $\psi_{12}\ne 0$ and is not $J$-Hermitian.

Let us assume that all remaining parameters on which the Ricci tensor does not depend are equal to zero. As $\psi_{34}\ne 0$, we consider $\psi_{34}=-1$. Then we obtain the following expressions for the canonical almost complex structure and associated almost pseudo-K\"{a}hler metric:
\[
J(e_2)= \psi_{12}\, e_1 -\psi_{11}\, e_2,\qquad
J(e_4)= -e_3,\qquad
J(e_6)= -\psi_{12}\, e_5 -\psi_{11}\, e_6.
\]
$$
g= \left[ \begin {array}{cccccc} 0&0&0&0&{\frac {1+{{\psi_{11}}}^{2}}{{
\psi_{12}}}}&-{\psi_{11}}\\ \noalign{\medskip}0&0&0&0&{\psi_{11}}&-{
\psi_{12}}\\ \noalign{\medskip}0&0&1&0&0&0\\ \noalign{\medskip}0&0&0&1
&0&0\\ \noalign{\medskip}{\frac {1+{{\psi_{11}}}^{2}}{{\psi_{12}}}}&{
\psi_{11}}&0&0&0&0\\ \noalign{\medskip}-{\psi_{11}}&-{\psi_{12}}&0&0&0
&0\end {array} \right].
$$

\section{Formulas for evaluations}
We now present the formulas which were used for the evaluations on Maple of curvature tensor of the associated metrics. Let $e_1,\ldots,e_{2n}$ be a basis of the Lie algebra $\mathfrak g$ and $C_{ij}^k$ a structure constant of the Lie algebra in this base:
\begin{equation}
[e_i,e_j]=\sum_{k=1}^{2n}C_{ij}^{k}e_k,  \label{strukt}
\end{equation}

\textbf{1. Compatible condition.}
This is the condition that $\omega(JX,Y) + \omega(X, JY) =0$, $\forall \, X, Y\in \g $. For the basis vectors we have:
$\omega (J (e_i), e_j) + \omega (e_i, J (e_j)) =0$, \quad
$\omega (J^k_i e_k, e_j) + \omega (e_i, J^s_j e_s) =0$.
\begin{equation}
\omega _ {k, j} J^k_i + \omega _ {i, s} J^s_j=0.
\end{equation}

\textbf{2. Connection components.} These are the components $\Gamma_{ij}^{k}$ in the formula
$\nabla _ {e _ {i}} e_j =\Gamma_{ij}^{k}e_k.$
For left-invariant vector fields we have: $2g ({\nabla}_{X} Y, Z) =g ([X, Y], Z) +g ([Z, X], Y)-g ([Y, Z], X)$. For the basis vectors we have:
$$
2g (\nabla _ {e _ {i}} e_j, e_k) =g ([e_i, e_j], e_k) +g ([e_k, e_i], e_j) +g (e_i, [e_k, e_j]),
$$
$$
2g _ {lk} \Gamma _ {ij} ^ {l} =g _ {pk} C _ {ij} ^ {p} +g _ {pj} C _ {ki} ^ {p} +g _ {ip} C _ {kj} ^ {p},
$$
\begin{equation}
\Gamma_{ij}^{n}=\frac{1}{2}g^{kn}\left(g_{pk}C_{ij}^{p}+g_{pj}C_{ki}^{p} +g_{ip}C_{kj}^{p}\right).
\end{equation}

\textbf{3. Curvature tensor.}
The formula is: $R(X, Y)Z =\nabla_{X} \nabla_{Y} Z -\nabla_{Y}\nabla_{X}Z -\nabla_{[X,Y]}Z$.
For the basis vectors we have: $R (e_i, e_j) e_k=R _ {ijk} ^s e_s $,
$$
R(e_i,e_j)e_k=\nabla_{e_{i}}\nabla_{e_{j}}e_{k}-\nabla_{e_{j}}\nabla_{e_{i}}e_{k} -\nabla_{[e_{i},e_{j}]}e_{k}.
$$
Therefore:
\begin{equation}
R_{ijk}^{s}=\Gamma_{ip}^{s}\Gamma_{jk}^{p}-\Gamma_{jp}^{s}\Gamma_{ik}^{p} -C_{ij}^{p}\Gamma_{pk}^{s}.
\end{equation}

\

\end{document}